\documentclass{endm}
\usepackage{endmmacro}
\usepackage{graphicx}
\usepackage{amsmath, amssymb, amsfonts, mathrsfs, mathtools}

\def \Zl {{\mathbb Z}}
\def \Nl {{\mathbb N}}
\def \Rl {{\mathbb R}}
\def \Cl {{\mathbb C}}

\def \x {{\mathbf x}}
\def \y {{\mathbf y}}

\def \o {{\mathbf 0}}

\def \m {{\mathbf m}}
\def \n {{\mathbf n}}
\def \D {{\mathbf D}}
\def \d {{\mathbf d}}

\begin{document}

\begin{verbatim}\end{verbatim}\vspace{2.5cm}

\begin{frontmatter}

\title{A class of $\mathit{gcd}$-graphs having Perfect State Transfer}

\author{Hiranmoy Pal
\thanksref{myemail}}
\address{Department of Mathematics\\ IIT
Guwahati\\Guwahati, India}

\author{Bikash Bhattacharjya\thanksref{coemail}}
\address{Department of Mathematics\\ IIT
Guwahati\\Guwahati, India}
   \thanks[myemail]{Email:
   \href{hiranmoy@iitg.ernet.in} {\texttt{\normalshape
   hiranmoy@iitg.ernet.in}}} \thanks[coemail]{Email:
   \href{b.bikash@iitg.ernet.in} {\texttt{\normalshape
   b.bikash@iitg.ernet.in}}}

\begin{abstract}
Let $G$ be a graph with adjacency matrix $A$. The transition matrix corresponding to $G$ is defined by $H(t):=\exp{\left(itA\right)}$, $t\in\Rl$. The graph $G$ is said to have perfect state transfer (PST) from a vertex $u$ to another vertex $v$, if there exist $\tau\in\Rl$ such that the $uv$-th entry of $H(\tau)$ has unit modulus. The graph $G$ is said to be periodic at $\tau\in\Rl$ if there exist $\gamma\in\Cl$ with $|\gamma|=1$ such that $H(\tau)=\gamma I$, where $I$ is the identity matrix. A $\mathit{gcd}$-graph is a Cayley graph over a finite abelian group defined by greatest common divisors. In this paper, we construct classes of $\mathit{gcd}$-graphs having periodicity and perfect state transfer.

\end{abstract}

\begin{keyword}
Perfect state transfer, Cayley Graph, Graph products.
\end{keyword}

\end{frontmatter}

\section{Introduction}
Perfect state transfer in quantum communication networks was initially studied by S. Bose \cite{bose}. Cayley graphs appear frequently in communication networks. Cayley graphs are very well known class of vertex transitive graphs. A circulant graph is a cayley graph over a cyclic group. In \cite{saxe}, it was shown that a circulant graph is periodic if and only if the graph is integral. Also, in \cite{god}, Theorem $6.1$ implies that a vertex transitive graph admits PST if the graph is periodic. So it is more likely that a Cayley graph would exhibit PST only when the graph has integral spectrum. We therefore investigate PST on $\mathit{gcd}$-graphs as these Cayley graphs are known to have integral spectrum. Some work has already been done in this direction on integral circulant graphs and cubelike graphs. An integral circulant graph is a $\mathit{gcd}$-graph over a cyclic group. Characterization of circulant graphs having PST is given in \cite{mil}. A cubelike graph is a $\mathit{gcd}$-graph over the group $\Zl_{2}\times\ldots\times\Zl_{2}$. Perfect state transfer on cubelike graphs has been discussed in \cite{ber,chi}. We, however, find PST in more general class of $\mathit{gcd}$-graphs. We now define $\mathit{gcd}$-graph and restate some relevant results. 
\par Let $\left(\Gamma,+\right)$ be a finite abelian group and consider $S\subseteq\Gamma$ with the property that $-S=\left\lbrace -s:s\in S\right\rbrace=S$, \emph{i.e,} the set $S$ is symmetric. The Cayley graph over $\Gamma$ with the connection set $S$ is denoted by $Cay\left(\Gamma,S\right)$. The graph $Cay(\Gamma,S)$ has the vertex set $\Gamma$ where two vertices $a,b\in\Gamma$ are adjacent if and only if $a-b\in S$. If the additive identity $0\in S$ then $Cay\left(\Gamma,S\right)$ has a loop at each of its vertices. We use the convention that each loop \textbf{contributes one} to the corresponding diagonal entry of the adjacency matrix. The following lemma implies that the adjacency matrices of two Cayley graphs, defined over a fixed abelian group (finite), commute.
\begin{lem}\cite{ahm}\label{3ab}
Let $S$ and $T$ be symmetric subsets of a group $\Gamma$. If $g T = T g$ for every $g\in \Gamma$, then the adjacency matrices of the Cayley graphs $Cay(\Gamma, S)$ and $Cay(\Gamma, T )$ commute.
\end{lem}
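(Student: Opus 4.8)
The plan is to realize both adjacency matrices inside the left regular representation of $\Gamma$ and then to read the hypothesis as conjugation--invariance of $T$. For $g\in\Gamma$ let $P_g$ be the permutation matrix on $\Cl^{\Gamma}$ with $(P_g)_{a,b}=1$ exactly when $a=gb$; these satisfy $P_gP_h=P_{gh}$ and $P_e=I$. Directly from the definition of the Cayley graph, the $(a,b)$-entry of the adjacency matrix of $Cay(\Gamma,S)$ equals $1$ precisely when $ab^{-1}\in S$, i.e. when $a=sb$ for a (necessarily unique) $s\in S$; hence $A_S=\sum_{s\in S}P_s$, and similarly $A_T=\sum_{t\in T}P_t$. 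The assumption that $S$ and $T$ are symmetric is exactly what forces these matrices to be symmetric, so that the Cayley graphs are genuinely undirected, but it plays no further role in the argument.

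Next I would expand the two products using $P_gP_h=P_{gh}$, obtaining
\[
A_SA_T=\sum_{s\in S}\sum_{t\in T}P_{st}\qquad\text{and}\qquad A_TA_S=\sum_{s\in S}\sum_{t\in T}P_{ts}.
\]
It therefore suffices to prove that for each fixed $s\in S$ the two inner sums agree. Reindexing the first by $u=st$ (a bijection from $T$ onto the set $sT$) and the second by $u=ts$ (a bijection from $T$ onto $Ts$) rewrites them as $\sum_{u\in sT}P_u$ and $\sum_{u\in Ts}P_u$ respectively. These are equal because the hypothesis $gT=Tg$, applied with $g=s\in\Gamma$, gives $sT=Ts$ as subsets of $\Gamma$. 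Summing over $s\in S$ then yields $A_SA_T=A_TA_S$.

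The only delicate points are bookkeeping ones: pinning down a consistent translation convention ($a=gb$ rather than $a=bg$) so that $g\mapsto P_g$ is a homomorphism, verifying that each off-diagonal $1$ in $A_S$ arises from a unique $s\in S$ so that no multiplicities appear in $A_S=\sum_{s\in S}P_s$, and noting that a possible loop (the case $e\in S$ or $e\in T$) is harmless since $P_e=I$. Conceptually there is no real obstacle: the argument is just the statement that $T$ is stable under every inner automorphism of $\Gamma$ if and only if the element $\sum_{t\in T}t$ is central in the group algebra $\Cl[\Gamma]$, and $A_T$ is the image of that central element under the regular representation, so it automatically commutes with $A_S=\sum_{s\in S}P_s$.
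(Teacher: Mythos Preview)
Your argument is correct. You express each adjacency matrix as a sum of left-translation permutation matrices $A_S=\sum_{s\in S}P_s$, $A_T=\sum_{t\in T}P_t$, and then the identity $A_SA_T=A_TA_S$ reduces, for each fixed $s$, to the set equality $sT=Ts$, which is exactly the hypothesis. The bookkeeping points you flag (consistent left-translation convention so that $P_gP_h=P_{gh}$, uniqueness of $s$ with $a=sb$, and the harmlessness of $e\in S$) are all handled correctly; the final remark about centrality of $\sum_{t\in T}t$ in $\Cl[\Gamma]$ is a clean conceptual summary.

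There is nothing to compare against in the present paper: Lemma~\ref{3ab} is quoted without proof from the reference~\cite{ahm}, and the paper only records the immediate corollary that for abelian $\Gamma$ any two Cayley adjacency matrices commute. Your proof is the standard one and would be exactly what one expects to find in the cited source.
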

Thus one can simply observe: if $\Gamma$ is an abelian group then adjacency matrices of any two cayley graphs over $\Gamma$ commute.\par
The greatest common divisor of two non-negative integers $m,n$ is denoted by $gcd(m,n)$. We use the convention that $gcd(0,n)=gcd(n,0)=n$ for every non-negative integer $n$. Consider two $r$-tuples of non-negative integers $\m=\left(m_1,\ldots, m_r\right)$ and $\n=\left(n_1,\ldots, n_r\right)$. For $i=1,\ldots,r$ suppose $d_i=gcd(m_i,n_i)$ and set $\d=\left(d_1,\ldots,d_r\right)$. We define $gcd(\m,\n)=\d$.\par
 The additive group of integers modulo $n$ is denoted by $\Zl_n$. Let $\left(\Gamma,+\right)$ be a finite abelian group and the cyclic group decomposition of $\Gamma$ is $$\Gamma=\Zl_{m_1}\oplus\ldots\oplus\Zl_{m_r},\text{ }m_i\geq 1 \text{ for } i=1,\ldots,r.$$
Suppose that $d_i$ is a positive divisor of $m_i$ for $i=1,\ldots, r$. For the divisor tuple $\d=\left(d_1,\ldots, d_r\right)$ of $\m=\left(m_1,\ldots, m_r\right)$, define \[S_\Gamma(\d)=\left\lbrace\x\in\Gamma : gcd(\x,\m)=\d\right\rbrace.\] Note that, for different tuples $\d$, the corresponding sets $S_\Gamma(\d)$ are disjoint. For a set of divisor tuples $\D$ of $\m$, define \[S_\Gamma(\D)=\bigcup\limits_{\d\in \D}S_\Gamma(\d)\]
The sets $S_\Gamma(\D)$ are called the gcd-sets of $\Gamma$. A Cayley graph $Cay(\Gamma,S)$ over a finite abelian group $\Gamma$ with a gcd-set $S$ is called a $\mathit{gcd}$-graph. If $\m\in\D$ then $S_\Gamma(\m)=\left\lbrace(0,\ldots,0)\right\rbrace$ and hence the corresponding $\mathit{gcd}$-graph has loops at each of its vertices. However, we mainly focus on finding PST on simple graphs. Therefore when considering PST on $\mathit{gcd}$-graphs we assume $\m\notin\D$. Since quantum particles cannot jump between disconnected components, PST is necessarily considered only on connected graphs. It is well known that a Cayley graph $Cay(\Gamma,S)$ is connected if and only if $S$ generates the group $\Gamma$. So whenever we consider PST on $\mathit{gcd}$-graphs we make sure that the set $\D$ generates the group $\Gamma$.\par
A circulant graph is a Cayley graph over the cyclic group $\Zl_n$. A graph with integer eigenvalues is called an integral graph. A circulant graph having integral spectrum is called an integral circulant graph. We denote the set of all divisors of a positive integer $n$ by $D_n$. The following result by \textit{W. So} characterizes the circulant graphs which are integral.
\begin{theorem}\label{1a}\cite{so}
A circulant graph $Cay(\Zl_n,S)$ is integral if and only if $S=\bigcup\limits_{d\in D}S_{\Zl_n}(d)$ for some set of divisors $D\subset D_{n}$.
\end{theorem}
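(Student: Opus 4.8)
The plan is to compute the spectrum of $Cay(\Zl_n,S)$ explicitly and then exploit the Galois action of $\mathrm{Gal}\!\left(\mathbb{Q}(\omega)/\mathbb{Q}\right)$, where $\omega=e^{2\pi i/n}$. Recall that the eigenvalues of $Cay(\Zl_n,S)$ are $\lambda_j=\sum_{s\in S}\omega^{js}$ for $j=0,1,\ldots,n-1$, and that inverting this discrete Fourier transform gives, for every $x\in\Zl_n$,
\[
n\,\chi_S(x)=\sum_{j=0}^{n-1}\lambda_j\,\omega^{-jx},
\]
where $\chi_S$ denotes the characteristic function of $S$ (this is immediate from $\sum_{j=0}^{n-1}\omega^{j(s-x)}$ being $n$ or $0$ according as $s\equiv x\pmod n$ or not).

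For the \emph{if} direction, I would first note that for a divisor $d$ of $n$ the set $S_{\Zl_n}(d)$ consists precisely of the elements $dy$ with $gcd(y,n/d)=1$, so that $\sum_{x\in S_{\Zl_n}(d)}\omega^{jx}=\sum_{gcd(y,n/d)=1}e^{2\pi i jy/(n/d)}$ is a Ramanujan sum. This quantity is an algebraic integer, being a sum of roots of unity, and it is fixed by every automorphism $\omega\mapsto\omega^{k}$ with $gcd(k,n)=1$, since multiplication by $k$ merely permutes the index set $\{y : gcd(y,n/d)=1\}$ modulo $n/d$; hence it is a rational integer. Consequently, if $S=\bigcup_{d\in D}S_{\Zl_n}(d)$, then each $\lambda_j$ is a finite sum of such integers, and $Cay(\Zl_n,S)$ is integral.

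For the \emph{only if} direction, assume $\lambda_j\in\Zl$ for every $j$. Fix a unit $k$ modulo $n$ and apply the field automorphism $\sigma_k\colon\omega\mapsto\omega^{k}$ to the inversion formula above. The numbers $\lambda_j$ and $n\,\chi_S(x)$ are rational integers, hence fixed by $\sigma_k$, while the right-hand side becomes $\sum_{j}\lambda_j\,\omega^{-jkx}=n\,\chi_S(kx)$ by the same formula applied to $kx$. Therefore $\chi_S(x)=\chi_S(kx)$ for all $x\in\Zl_n$ and all units $k$, which means $S$ is a union of orbits of the multiplicative group $\Zl_n^{*}$ acting on $\Zl_n$ by multiplication. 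It then remains to identify these orbits: if $gcd(x,n)=d$, the orbit of $x$ equals $S_{\Zl_n}(d)$. The inclusion ``$\subseteq$'' is clear since $gcd(kx,n)=d$ whenever $k$ is a unit; for ``$\supseteq$'', write $x=dy_0$ with $gcd(y_0,n/d)=1$ and, given any $dy$ with $gcd(y,n/d)=1$, use the surjectivity of the reduction map $\Zl_n^{*}\to\Zl_{n/d}^{*}$ to choose a unit $k$ modulo $n$ with $k\equiv y\,y_0^{-1}\pmod{n/d}$, so that $kx\equiv dy\pmod{n}$. Taking $D=\{gcd(x,n) : x\in S\}\subseteq D_n$ then gives $S=\bigcup_{d\in D}S_{\Zl_n}(d)$.

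I expect the orbit identification just above — in particular the surjectivity of $\Zl_n^{*}\to\Zl_{n/d}^{*}$, which is what guarantees that an entire gcd-class is a single $\Zl_n^{*}$-orbit rather than a disjoint union of several — to be the only point requiring genuine care; the Fourier inversion, the integrality of Ramanujan sums, and pushing $\sigma_k$ through the inversion formula are all routine once this is in hand.
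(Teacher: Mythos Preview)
Your argument is correct: the Fourier-inversion identity, the integrality of Ramanujan sums via Galois invariance, and the identification of $\Zl_n^{*}$-orbits with the gcd-classes $S_{\Zl_n}(d)$ (using surjectivity of $\Zl_n^{*}\to\Zl_{n/d}^{*}$) are all sound, and you have flagged the one nontrivial step accurately.

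There is, however, nothing in the paper to compare against. Theorem~\ref{1a} is quoted from \cite{so} without proof; the authors merely invoke it as background for the definition of integral circulant graphs and of $ICG_n(D)$. So your write-up is not an alternative to the paper's argument but rather a self-contained proof of a cited result. If anything, it is essentially the same proof So gives in \cite{so}, phrased in Galois-theoretic language: So works directly with Ramanujan sums $c(k,n)$ and the observation that the eigenvalue at $j$ depends only on $\gcd(j,n)$, which is exactly your orbit statement seen from the spectral side.
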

Therefore an integral circulant graph can be defined by the order $n$ of the cyclic group and a set of divisors $D\subset D_n$ and thus it is denoted by $ICG_n(D)$. The following result characterizes all connected integral circulant graphs.
\begin{theorem}\label{1b}\cite{so}
An integral circulant graph $ICG_{n}(D)$, $D=\left\lbrace d_{1},\ldots, d_{n}\right\rbrace$, is connected if and only if $gcd(n,d_{1},\ldots, d_{n})=1$.
\end{theorem}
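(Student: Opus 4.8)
The plan is to compute explicitly the subgroup of $\Zl_n$ generated by the connection set and compare it with $\Zl_n$, invoking the criterion recalled above: a Cayley graph $Cay(\Gamma,S)$ is connected if and only if $\langle S\rangle=\Gamma$. Write $S=\bigcup_{i=1}^{n}S_{\Zl_n}(d_i)$ and set $g=\gcd(n,d_1,\ldots,d_n)$. Since the cyclic subgroup $\langle g\rangle$ (the set of multiples of $g$ in $\Zl_n$) is all of $\Zl_n$ precisely when $g=1$, it suffices to prove $\langle S\rangle=\langle g\rangle$.

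For the inclusion $\langle S\rangle\subseteq\langle g\rangle$: each $d_i$ belongs to $D\subseteq D_n$, so $d_i\mid n$; hence every $x\in S_{\Zl_n}(d_i)$ satisfies $\gcd(x,n)=d_i$, so $d_i\mid x$, and therefore $g\mid d_i\mid x$. Thus every element of $S$ lies in $\langle g\rangle$, and since $\langle g\rangle$ is a subgroup, $\langle S\rangle\subseteq\langle g\rangle$.

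For the reverse inclusion, observe that $\gcd(d_i,n)=d_i$ forces $d_i\in S_{\Zl_n}(d_i)\subseteq S$ for every $i$. Hence $\langle S\rangle$ contains the subgroup $\langle d_1,\ldots,d_n\rangle$, which in $\Zl_n$ equals $\langle \gcd(d_1,\ldots,d_n,n)\rangle=\langle g\rangle$ (iterating the identity $\langle a\rangle+\langle b\rangle=\langle\gcd(a,b)\rangle$, valid in any cyclic group). So $\langle g\rangle\subseteq\langle S\rangle$, and combining the two inclusions yields $\langle S\rangle=\langle g\rangle$. Applying the connectivity criterion completes the proof.

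There is no serious obstacle here; the only point worth noting is that $S$ typically contains many elements besides the representatives $d_i$, but because each such element is a multiple of the corresponding $d_i$, hence of $g$, these extra elements enlarge neither $\langle S\rangle$ nor the relevant gcd. The argument therefore reduces to an elementary gcd computation inside $\Zl_n$ together with the standard characterization of connectedness of Cayley graphs.
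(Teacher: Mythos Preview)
Your argument is correct: you show $\langle S\rangle=\langle g\rangle$ where $g=\gcd(n,d_1,\ldots,d_n)$, using that each $d_i\in S_{\Zl_n}(d_i)\subseteq S$ and that every element of $S_{\Zl_n}(d_i)$ is a multiple of $d_i$ and hence of $g$; combined with the standard connectivity criterion for Cayley graphs this gives the result. Note, however, that the paper does not supply its own proof of this statement---it is quoted from \cite{so} as background---so there is nothing in the text to compare your approach against.
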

We now define Kronecker product of two given graphs $G$ and $H$. Let the graphs $G$ and $H$ have the vertex sets $U$ and $V$, respectively. The Kronecker product of $G$ and $H$, denoted by $G\times H$, has the vertex set $U\times V$. Two vertices $(u_{1},u_{2})$ and $(v_{1},v_{2})$ are adjacent in $G\times H$ whenever $u_{1}$ is adjacent to $v_{1}$ in $G$ and $u_{2}$ is adjacent to $v_{2}$ in $H$. If the graphs $G$ and $H$ have the adjacency matrices $A$ and $B$, respectively, then $G\times H$ has the adjacency matrix $A\otimes B$.
\par The next result enables us to find the transition matrix of Kronecker product of graphs when the transition matrix of one of the graphs is known.
\begin{prop}\cite{god,pal}\label{aa}
Let $G$ and $H$ be two graphs having adjacency matrices $A$ and $B$. Suppose the spectral decomposition of $B$ is $B=\sum\limits_{s=1}^{q}\mu_{s}F_{s}$. If $H(t)$ is the transition matrix of $G$ then $G\times H$ has the transition matrix $\sum\limits_{s=1}^{q} H(\mu_{s}t)\otimes F_{s}$.
\end{prop}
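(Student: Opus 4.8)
The plan is to expand the matrix exponential $\exp\!\left(it(A\otimes B)\right)$ --- which is, by definition, the transition matrix of $G\times H$, since that graph has adjacency matrix $A\otimes B$ --- as a power series and then push the Kronecker structure through it. The two facts that make this work are the \emph{mixed-product property} $(A\otimes B)(C\otimes D)=(AC)\otimes(BD)$, which iterates to $(A\otimes B)^k=A^k\otimes B^k$, and the standard properties of the spectral idempotents of $B$, namely $F_sF_{s'}=\delta_{ss'}F_s$ and $\sum_{s=1}^q F_s=I$. From the latter a trivial induction gives $B^k=\sum_{s=1}^q\mu_s^k F_s$ for every $k\ge 0$.

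Carrying this out, I would write
\begin{equation*}
\exp\!\left(it(A\otimes B)\right)=\sum_{k\ge 0}\frac{(it)^k}{k!}\,(A\otimes B)^k=\sum_{k\ge 0}\frac{(it)^k}{k!}\Bigl(A^k\otimes\sum_{s=1}^q\mu_s^k F_s\Bigr)=\sum_{s=1}^q\sum_{k\ge 0}\frac{(it\mu_s)^k}{k!}\,A^k\otimes F_s,
\end{equation*}
where bilinearity of the Kronecker product is used in the last two steps and the order of summation is interchanged. Recognising the inner sum over $k$ as $\exp(it\mu_s A)=H(\mu_s t)$ then yields $\exp\!\left(it(A\otimes B)\right)=\sum_{s=1}^q H(\mu_s t)\otimes F_s$, which is the claim.

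There is no real obstacle here; the only step needing a word of justification is the interchange of the two sums, which is legitimate because all matrices have fixed finite size, so each entry of the double series converges absolutely. If one prefers to avoid series manipulations, essentially the same computation can be packaged as follows: since $A\otimes B$ commutes with each projection $I\otimes F_s$ and $(A\otimes B)(I\otimes F_s)=(\mu_s A)\otimes F_s$, one gets $\exp\!\left(it(A\otimes B)\right)(I\otimes F_s)=H(\mu_s t)\otimes F_s$, and summing over $s$ using $\sum_s(I\otimes F_s)=I\otimes I$ gives the result. I would present whichever formulation fits the surrounding notation better, though the power-series argument is the most self-contained.
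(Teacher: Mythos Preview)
Your argument is correct and is the standard direct computation. Note, however, that the paper does not actually prove this proposition: it is quoted with a citation to \cite{god,pal} and used as a black box, so there is no ``paper's own proof'' to compare against. Your power-series expansion (or the equivalent projection argument you sketch at the end) is exactly the kind of proof one finds in those references, and nothing further is needed.
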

A few more information on perfect state transfer of Kronecker products can be found in \cite{god}.

\section{Perfect state transfer on cubelike graphs}
PST on simple cubelike graphs has already been discussed in \cite{ber,chi}. In this section we discuss some relevant results from \cite{chi}, which are also valid for looped cubelike graphs. We include this section for convenience as the results will be used to find PST in $\mathit{gcd}$-graphs.
\par A cubelike graph $X(C)$ is a Cayley graph over $\Zl_{2}^{n}$ with a connection set $C\subset\Zl_{2}^{n}$. For each $u\in\Zl_{2}^{n}$, the map $P_{u}:\Zl_{2}^{n}\longrightarrow\Zl_{2}^{n}$ defined by $P_{u}\left(x\right)=x+u$ is a permutation of the elements of $\Zl_{2}^{n}$ and hence it can be realized as a permutation matrix of appropriate order. It is easy to see that $P_{0}=I$ and  $P_{u}P_{v}=P_{u+v}$ which implies $P_{u}^{2}=I$. The following result finds the adjacency matrix of a cubelike graph.
\begin{lem}\label{2a}\cite{chi}
If $C\subseteq\Zl_{2}^{n}$ then the cubelike graph $X(C)$ has the adjacency matrix $A=\sum\limits_{u\in C}P_{u}$.
\end{lem}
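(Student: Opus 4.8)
The plan is to verify the claimed identity entrywise, indexing the rows and columns of every matrix in sight by the elements of $\Zl_2^n$. First I would recall that, by the definition of a Cayley graph, the adjacency matrix $A$ of $X(C)$ is given by $A_{a,b}=1$ precisely when $a-b\in C$, and $A_{a,b}=0$ otherwise. Here I should note that in $\Zl_2^n$ one has $-u=u$ for every $u$, so $C$ is automatically symmetric and the condition $a-b\in C$ is the same as $a+b\in C$; in particular $A$ is symmetric. If $0\in C$, this prescription puts a $1$ in position $(a,a)$ for every vertex $a$, which is exactly the stated convention that each loop contributes one to the corresponding diagonal entry, so no separate case analysis for loops is needed.

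Next I would compute the entries of $\sum_{u\in C}P_u$. Since $P_u$ is the permutation matrix of the map $x\mapsto x+u$, we have $(P_u)_{a,b}=1$ if $a=b+u$ and $(P_u)_{a,b}=0$ otherwise; equivalently $(P_u)_{a,b}=1$ iff $u=a-b$ (and because $u=-u$ in $\Zl_2^n$, each $P_u$ is itself symmetric, so the precise rows-versus-columns convention is immaterial). Summing over $u\in C$ gives
\[
\Big(\sum_{u\in C}P_u\Big)_{a,b}=\sum_{u\in C}(P_u)_{a,b}=\big|\{u\in C:u=a-b\}\big|,
\]
which equals $1$ when $a-b\in C$ and $0$ otherwise, since $C$ is a set. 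Comparing this with the description of $A$ above shows that the two matrices agree in every entry, hence $A=\sum_{u\in C}P_u$.

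There is no genuine obstacle here beyond fixing conventions consistently, in particular making sure the loop convention is respected in the case $0\in C$. Alternatively, the same conclusion can be phrased structurally: $P_u$ sends the standard basis vector $e_b$ to $e_{b+u}$, and the neighbourhood of the vertex $b$ in $X(C)$ is exactly $\{b+u:u\in C\}$, so $\sum_{u\in C}P_u$ sends $e_b$ to $\sum_{u\in C}e_{b+u}$, which is by definition the $b$-th column of the adjacency matrix. This second formulation is the one most convenient for the later manipulations involving the transition matrix $H(t)=\exp(itA)$.
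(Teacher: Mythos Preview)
Your argument is correct: the entrywise comparison between $A$ and $\sum_{u\in C}P_u$ is exactly the right verification, and you have handled the loop case $0\in C$ and the symmetry $-u=u$ cleanly. Note, however, that the paper does not actually supply its own proof of this lemma; it is quoted from \cite{chi} and stated without argument, so there is no in-paper proof to compare against. Your direct verification is the standard one and would be the natural choice had the authors chosen to include a proof.
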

Note that $\exp{\left(it(P_u+P_v)\right)}=\exp{(itP_u)}\exp{(itP_v)}$ as $P_{u}P_{v}=P_{u+v}=P_{v}P_{u}$ for all $u,v\in\Zl_n$. Using this property the transition matrix of a cubelike graph can be calculated as follows.
\begin{lem}\label{2b}\cite{chi}
If $H(t)$ is the transition matrix of $X(C)$ then $$H(t)=\prod\limits_{u\in C}\exp{(itP_{u})}.$$
\end{lem}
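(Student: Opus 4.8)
The plan is to combine the description of the adjacency matrix in Lemma~\ref{2a} with the commutativity of the translation matrices $P_u$. By Lemma~\ref{2a} the cubelike graph $X(C)$ has adjacency matrix $A=\sum_{u\in C}P_u$, and by definition its transition matrix is $H(t)=\exp(itA)=\exp\!\left(it\sum_{u\in C}P_u\right)$. The key structural fact, already noted just above the statement, is that any two of the matrices $P_u$ commute, since $P_uP_v=P_{u+v}=P_{v+u}=P_vP_u$; in particular every $P_u$ commutes with any $\Cl$-linear combination of the $P_v$'s.

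Next I would invoke the elementary fact that for commuting matrices $X$ and $Y$ one has $\exp(X+Y)=\exp(X)\exp(Y)$ (this follows by multiplying the two power series and regrouping via the binomial theorem, which is valid precisely because $XY=YX$). Writing $C=\{u_1,\dots,u_k\}$ and setting $X=itP_{u_1}$ and $Y=it\sum_{j=2}^{k}P_{u_j}$, which commute by the previous paragraph, gives $H(t)=\exp(itP_{u_1})\exp\!\left(it\sum_{j=2}^{k}P_{u_j}\right)$. A straightforward induction on $|C|$ then yields $H(t)=\prod_{u\in C}\exp(itP_u)$; the product is unambiguous because the factors $\exp(itP_u)$ pairwise commute, being power series in the pairwise-commuting $P_u$.

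There is no serious obstacle here: the content of the lemma is entirely the passage from a sum inside a single exponential to a product of exponentials, and the one ingredient that makes this legitimate is the commutativity $P_uP_v=P_vP_u$ inherited from the group law of $\Zl_{2}^{n}$. The only point to be careful about is to state explicitly that this commutativity is exactly what justifies both the factorization $\exp(X+Y)=\exp(X)\exp(Y)$ at each step of the induction and the order-independence of $\prod_{u\in C}\exp(itP_u)$, so that the proof is really just a formalization of the remark preceding the statement.
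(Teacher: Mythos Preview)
Your proposal is correct and follows exactly the approach the paper indicates: the paper does not give a formal proof of Lemma~\ref{2b} (it is cited from \cite{chi}), but the sentence preceding the lemma already records the key identity $\exp\!\left(it(P_u+P_v)\right)=\exp(itP_u)\exp(itP_v)$ via $P_uP_v=P_{u+v}=P_vP_u$, and your argument is simply the induction that turns this two-term factorization into the full product over $C$.
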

The next result determines periodicity and perfect state transfer in cubelike graphs at $t=\frac{\pi}{2}$. For other values of $t$, those properties are not completely characterized in cubelike graphs.
\begin{theorem}\label{2c}\cite{chi}
Let $C$ be a subset of $\Zl_{2}^{n}$ and let $\sigma$ be the sum of the elements of $C$. If $\sigma\neq \o$ then PST occurs in $X(C)$ from $u$ to $u+\sigma$ at time $\frac{\pi}{2}$. If $\sigma=\o$ then $X$ is periodic at $\frac{\pi}{2}$. 
\end{theorem}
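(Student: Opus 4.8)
The plan is to compute $H\!\left(\tfrac{\pi}{2}\right)$ directly from the product formula in Lemma~\ref{2b}. For a single summand we first evaluate $\exp\!\left(i\tfrac{\pi}{2}P_u\right)$. Since $P_u^2=I$, the matrix $P_u$ has only eigenvalues $\pm 1$, so using the even/odd power split of the exponential series we get $\exp(itP_u)=\cos(t)\,I+i\sin(t)\,P_u$; at $t=\tfrac{\pi}{2}$ this becomes $i\,P_u$. Hence by Lemma~\ref{2b},
\[
H\!\left(\tfrac{\pi}{2}\right)=\prod_{u\in C}\bigl(i\,P_u\bigr)=i^{|C|}\prod_{u\in C}P_u .
\]
Because the $P_u$ commute and $P_uP_v=P_{u+v}$, the product $\prod_{u\in C}P_u$ telescopes to $P_\sigma$, where $\sigma=\sum_{u\in C}u$ is the sum (in $\Zl_2^n$) of the elements of $C$. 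Therefore $H\!\left(\tfrac{\pi}{2}\right)=i^{|C|}P_\sigma$.

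Now I would read off the two cases. If $\sigma=\o$, then $P_\sigma=P_{\o}=I$, so $H\!\left(\tfrac{\pi}{2}\right)=i^{|C|}I$, which is a unimodular scalar times $I$; by the definition of periodicity given in the abstract, $X(C)$ is periodic at $\tfrac{\pi}{2}$. If $\sigma\neq\o$, then $P_\sigma$ is the permutation matrix of the map $x\mapsto x+\sigma$, whose $(u,v)$ entry equals $1$ exactly when $v=u+\sigma$ and $0$ otherwise. Thus the $(u,u+\sigma)$ entry of $H\!\left(\tfrac{\pi}{2}\right)$ is $i^{|C|}$, which has unit modulus, so PST occurs from $u$ to $u+\sigma$ at time $\tfrac{\pi}{2}$; note this holds for every vertex $u$ simultaneously, consistent with vertex-transitivity.

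There is no serious obstacle here: the argument is essentially a bookkeeping exercise once the identity $\exp(itP_u)=\cos(t)I+i\sin(t)P_u$ is in hand. The only point requiring a little care is justifying the interchange of the infinite exponential series with the finite product — but this is immediate since each $\exp(itP_u)$ converges absolutely and the $P_u$ pairwise commute, which is exactly the remark preceding Lemma~\ref{2b}. A secondary point worth stating explicitly is that $u\mapsto u+\sigma$ is a fixed-point-free involution precisely when $\sigma\neq\o$, so the PST is genuine (source and target are distinct vertices).
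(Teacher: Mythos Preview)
Your argument is correct and matches the paper's own treatment essentially line for line: the paper computes $\exp(itP_u)=\cos(t)I+i\sin(t)P_u$ from $P_u^2=I$, evaluates at $t=\frac{\pi}{2}$ to get $iP_u$, and then uses Lemma~\ref{2b} together with $\prod_{u\in C}P_u=P_\sigma$ to obtain $H\!\left(\frac{\pi}{2}\right)=i^{|C|}P_\sigma$, from which both cases follow. Your added remarks on commutativity and the fixed-point-free involution are fine but not needed beyond what the paper already states.
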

We already have seen that $P_{u}^{2}=I$ and this implies $$\exp{(itP_u)}= I+itP_u-\frac{t^{2}}{2!}I-i\frac{t^{3}}{3!}P_u+\frac{t^{4}}{4!}I+\ldots = \cos{(t)}I+i\sin{(t)}P_u.$$ Also observe that if $\sigma$ is the sum of the elements in $C$ then $\prod\limits_{u\in C}P_{u}=P_{\sigma}$. By Lemma \ref{2b}, the transition matrix of $X(C)$ at $\frac{\pi}{2}$ can be evaluated as $$H\left(\frac{\pi}{2}\right)=\prod\limits_{u\in C}iP_{u}=i^{|C|}P_{\sigma}.$$ 

\begin{prop}\label{2d}
Let $X(C)$ be a cubelike graph with the connection set $C\subset\Zl_{2}^{n}$. Also assume that the sum of the elements of $C$ is $\o$ and $|C|\equiv 0\;(mod\;4)$. Then for every integral graph $G$, the transition matrix of $X(C)\times G$ at $\frac{\pi}{2}$ is the identity matrix.
\end{prop}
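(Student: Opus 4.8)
The plan is to combine the explicit formula for the transition matrix of a cubelike graph at $t = \frac{\pi}{2}$ with Proposition~\ref{aa} applied to the Kronecker product. First I would recall from the computation preceding the statement that, under the hypotheses $\sigma = \o$ and $|C| \equiv 0 \pmod 4$, the transition matrix of $X(C)$ satisfies
\[
H\left(\tfrac{\pi}{2}\right) = i^{|C|} P_{\sigma} = i^{|C|} P_{\o} = i^{|C|} I = I,
\]
since $i^{|C|} = 1$ when $4 \mid |C|$ and $P_{\o} = I$. More generally, the same computation gives, for any real $t$, the formula $H(t) = \prod_{u \in C}\bigl(\cos(t) I + i\sin(t) P_u\bigr)$; what I will actually need is that $H\bigl(\frac{\pi}{2}\,\mu\bigr)$ is a scalar multiple of the identity for every \emph{integer} $\mu$, because then the corresponding factor in the Kronecker-product transition matrix collapses to a scalar times a spectral idempotent.

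Next I would invoke Proposition~\ref{aa} with $H = G$ the integral graph and $G$ (in the notation of that proposition) equal to our cubelike graph $X(C)$. Writing the spectral decomposition of the adjacency matrix $B$ of the integral graph as $B = \sum_{s=1}^{q} \mu_s F_s$, the proposition gives the transition matrix of $X(C) \times G$ at time $\tau$ as $\sum_{s=1}^{q} H_{X(C)}(\mu_s \tau) \otimes F_s$. Evaluating at $\tau = \frac{\pi}{2}$, each exponent $\mu_s \tau = \frac{\pi}{2}\mu_s$ has $\mu_s \in \Zl$ because $G$ is integral. So I must show $H_{X(C)}\bigl(\frac{\pi}{2}\mu_s\bigr) = I$ for each $s$. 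Using Lemma~\ref{2b} and $\exp(it P_u) = \cos(t) I + i\sin(t)P_u$, at $t = \frac{\pi}{2}\mu_s$ with $\mu_s$ an integer each factor is either $\pm I$ (when $\mu_s$ is even, giving $\cos = \pm 1$, $\sin = 0$) or $\pm i P_u$ (when $\mu_s$ is odd), so the product is $(\pm 1)^{|C|} i^{\,\varepsilon|C|} P_{\sigma}$ for a suitable sign and $\varepsilon \in \{0,1\}$; since $\sigma = \o$ this is $(\pm 1)^{|C|} i^{\,\varepsilon|C|} I$, and $4 \mid |C|$ forces this scalar to be $1$. Hence every summand becomes $I \otimes F_s$, and $\sum_s I \otimes F_s = I \otimes \sum_s F_s = I \otimes I = I$, using that the idempotents $F_s$ sum to the identity.

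The main obstacle — really the only point requiring care — is handling the parity of $\mu_s$ uniformly: I need the scalar $(\pm 1)^{|C|} i^{\varepsilon |C|}$ to equal $1$ in \emph{all} cases, not just when $\mu_s$ is even. This is where the two hypotheses are both essential: $\sigma = \o$ kills the permutation part $P_{\sigma}$ regardless of parity, and $|C| \equiv 0 \pmod 4$ makes both $(-1)^{|C|} = 1$ and $i^{|C|} = 1$, so the leftover scalar is trivial for every integer exponent. Once this is checked, assembling the pieces via Proposition~\ref{aa} and the completeness relation $\sum_s F_s = I$ is immediate. I would also remark that, in particular, taking $G = K_1$ recovers the periodicity of $X(C)$ at $\frac{\pi}{2}$ stated in Theorem~\ref{2c}, and that the product $X(C) \times G$ is itself a $\mathit{gcd}$-graph when $G$ is, which is why this proposition is useful for the constructions in the next section.
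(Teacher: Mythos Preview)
Your proof is correct and follows essentially the same structure as the paper's: compute $H(\pi/2)=I$ for $X(C)$, invoke Proposition~\ref{aa}, verify $H(\mu_s\pi/2)=I$ for every integer eigenvalue $\mu_s$, and collapse $\sum_s I\otimes F_s$ to $I$. The only difference is in that verification step: where you split on the parity of $\mu_s$ and track the scalar $(\pm1)^{|C|}i^{\varepsilon|C|}$, the paper simply uses the one-parameter group property $H(\mu\,\tfrac{\pi}{2})=H(\tfrac{\pi}{2})^{\mu}=I^{\mu}=I$, which avoids the case analysis entirely.
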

\begin{proof}
If $H(t)$ is the transition matrix of the cubelike graph $X(C)$ then we find that $$H\left(\frac{\pi}{2}\right)=\prod\limits_{u\in C}iP_{u}=i^{|C|}P_{\sigma},$$ where $\sigma$ is the sum of the elements in $C$. Now $\sigma=\o$ implies that $P_\sigma=I$ and therefore if $|C|\equiv0\;(mod\;4)$, then $H\left(\frac{\pi}{2}\right)=I$. Hence for every integer $\mu$, $H\left(\frac{\pi\mu}{2}\right)=\left(H\left(\frac{\pi}{2}\right)\right)^{\mu}=I$. Now consider $\sum\limits_{s=1}^{q}\mu_{s}F_{s}$ to be the spectral decomposition of $G$. Here the idempotents has the property that $\sum\limits_{s=1}^{q}F_{s}=I$. As the graph $G$ is assumed to be integral, the eigenvalues $\mu_s$ of $G$ are integers. By Proposition \ref{aa}, the transition matrix of $X(C)\times G$ at $\frac{\pi}{2}$ is obtained as
$$\sum\limits_{s=1}^{q} H\left(\frac{\pi\mu_s}{2}\right)\otimes F_{s}=I\otimes\sum\limits_{s=1}^{q} F_{s}=I\otimes I=I.$$ This proves our claim.
\end{proof}

It is clear from Proposition \ref{2d} that the graph $X(C)\times G$ is in fact periodic at $\frac{\pi}{2}$ whenever sum of the elements in $C$ is $\o$ along with $|C|\equiv0\;(mod\;4)$.

\section{Periodicity and Perfect State Transfer on gcd-graphs}

In this section, we find some periodic integral circulant graphs which in fact do not exhibit PST. Using those graphs, we construct some $\mathit{gcd}$-graphs allowing PST. The following result finds the transition matrix of the union of two edge disjoint Cayley graphs defined over a fixed finite abelian group.
\begin{prop}\label{3a}
Let $\Gamma$ be a finite abelian group and consider $S$ and $T$ to be disjoint symmetric subsets of $\Gamma$. If $H_{S}(t)$ and $H_{T}(t)$ are the transition matrices of $Cay(\Gamma, S)$ and $ Cay(\Gamma, T)$, respectively, then the transition matrix of $Cay(\Gamma, S\cup T)$ is $H_{S}(t)H_{T}(t)$.
\end{prop}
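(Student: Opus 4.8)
The plan is to exploit the fact, established in the excerpt via Lemma~\ref{3ab}, that adjacency matrices of Cayley graphs over a \emph{fixed abelian} group $\Gamma$ all commute. Let $A_S$ and $A_T$ be the adjacency matrices of $Cay(\Gamma,S)$ and $Cay(\Gamma,T)$. Since $S$ and $T$ are disjoint, the adjacency matrix of $Cay(\Gamma,S\cup T)$ is exactly $A_S+A_T$: an off-diagonal entry indexed by $(a,b)$ is $1$ precisely when $a-b\in S\cup T$, and disjointness guarantees this is the sum of the corresponding entries of $A_S$ and $A_T$ with no double counting (and the diagonal is handled the same way if $0$ lies in one of the sets, using the stated loop convention). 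So the only real content is the matrix-exponential identity $\exp\!\big(it(A_S+A_T)\big)=\exp(itA_S)\exp(itA_T)$.

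First I would record that $\Gamma$ abelian forces $A_SA_T=A_TA_S$ by Lemma~\ref{3ab} (taking the hypothesis $gT=Tg$ for free since $\Gamma$ is abelian). Then I would invoke the standard fact that for commuting matrices $X,Y$ one has $\exp(X+Y)=\exp(X)\exp(Y)$; this is immediate from the power-series definition, exactly as in the cubelike computation $\exp(it(P_u+P_v))=\exp(itP_u)\exp(itP_v)$ already used in the excerpt. Applying this with $X=itA_S$ and $Y=itA_T$ gives
\[
H_{S\cup T}(t)=\exp\!\big(it(A_S+A_T)\big)=\exp(itA_S)\exp(itA_T)=H_S(t)H_T(t),
\]
which is the claim. (Because the two factors commute, the order is irrelevant, so one may equally write $H_T(t)H_S(t)$.)

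There is no serious obstacle here; the proposition is essentially a bookkeeping statement plus the commuting-exponentials lemma. The one point that deserves an explicit sentence is \emph{why} $A_{S\cup T}=A_S+A_T$ — namely the disjointness of $S$ and $T$, together with the convention that a loop contributes $1$ (not $2$) to the diagonal — since without disjointness the sum would overcount edges. Everything else is a direct appeal to results and computations already present in the excerpt.
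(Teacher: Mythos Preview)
Your proof is correct and follows essentially the same approach as the paper: observe that disjointness gives $A_{S\cup T}=A_S+A_T$, invoke Lemma~\ref{3ab} to get commutativity since $\Gamma$ is abelian, and then apply the commuting-exponentials identity $\exp(X+Y)=\exp(X)\exp(Y)$. The paper's proof is slightly terser but otherwise identical in structure.
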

\begin{proof}
The adjacency matrix of the graph $Cay(\Gamma, S\cup T)$ is the sum of the adjacency matrices of the graphs $Cay(\Gamma, S)$ and $ Cay(\Gamma, T)$. As the group $\Gamma$ is abelian, by using Lemma \ref{3ab}, we find that the adjacency matrices of $Cay(\Gamma, S)$ and $ Cay(\Gamma, T)$ commute. This together with the fact that for any two square matrices $A, B$ with $AB=BA$, $\exp{(A+B)}=\exp{(A)}\exp{(B)}$, implies that $$H_{S\cup T}(t)=H_{S}(t)H_{T}(t).$$ This proves our claim.
\end{proof}

Now we show that certain integral circulant graph with an even number of vertices can be realized as a Kronecker product of a cubelike graph and an integral graph. We are going to use this representation of integral circulant graphs in finding periodic integral circulant graphs. We prove the result by using few techniques from \cite{wal}. 

\begin{prop}\label{1c}
Let $n\in2\Nl$ be such that $n=2^{\alpha}m$, where $\alpha\in\Nl$ and $m$ is an odd positive integer. Also consider $d=2^{\beta}m'$ to be a proper divisor of $n$. Let $X(C)$ be the cubelike graph with the connection set $$C=\left\lbrace \left(c_{0}, c_{1},\ldots,c_{\alpha-1}\right)\in\Zl_2^{\alpha}: c_{j}=0 \text{ for every } j<\beta \text{ and } c_{\beta}=1\right\rbrace.$$ Then we have the isomorphism $$Cay(\Zl_{n},S_{\Zl_{n}}(d))\cong X(C)\times  Cay(\Zl_{m},S_{\Zl_{m}}(m')).$$
\end{prop}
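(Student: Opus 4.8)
The plan is to reduce the assertion to two facts: that an integral circulant graph whose order splits as $n=2^{\alpha}m$ with $m$ odd splits correspondingly as a Kronecker product via the Chinese Remainder Theorem, and that the resulting ``$2$-primary factor'' is exactly the cubelike graph $X(C)$.

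First I would invoke the ring (hence additive-group) isomorphism $\psi:\Zl_{n}\longrightarrow\Zl_{2^{\alpha}}\oplus\Zl_{m}$ given by $\psi(x)=(x\bmod 2^{\alpha},\,x\bmod m)$, which exists because $\gcd(2^{\alpha},m)=1$. Since $\gcd(x,n)$ depends only on $x$ and factors as $\gcd(x,2^{\alpha})\cdot\gcd(x,m)$, while $d=2^{\beta}m'$ with $2^{\beta}$ a power of $2$ and $m'$ odd, the equality $\gcd(x,n)=d$ is equivalent to the conjunction of $\gcd(x,2^{\alpha})=2^{\beta}$ and $\gcd(x,m)=m'$. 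Hence $\psi$ carries $S_{\Zl_{n}}(d)$ bijectively onto the product set $S_{\Zl_{2^{\alpha}}}(2^{\beta})\times S_{\Zl_{m}}(m')$, so $\psi$ is a graph isomorphism from $Cay(\Zl_{n},S_{\Zl_{n}}(d))$ onto the Cayley graph of $\Zl_{2^{\alpha}}\oplus\Zl_{m}$ with that product connection set. I would then record the elementary identification of such a graph as a Kronecker product: for symmetric $S_{i}\subseteq\Gamma_{i}$ the vertices $(a_{1},a_{2})$ and $(b_{1},b_{2})$ are adjacent in $Cay(\Gamma_{1}\oplus\Gamma_{2},S_{1}\times S_{2})$ precisely when $a_{i}-b_{i}\in S_{i}$ for both $i$, i.e. precisely when $(a_{1},a_{2})$ and $(b_{1},b_{2})$ are adjacent in $Cay(\Gamma_{1},S_{1})\times Cay(\Gamma_{2},S_{2})$ (equivalently the adjacency matrix is $A_{1}\otimes A_{2}$). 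Together these reduce the proposition to the single isomorphism $Cay(\Zl_{2^{\alpha}},S_{\Zl_{2^{\alpha}}}(2^{\beta}))\cong X(C)$.

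This last isomorphism is the heart of the matter, and is where techniques from \cite{wal} enter. I would take the bijection $\phi:\Zl_{2^{\alpha}}\longrightarrow\Zl_{2}^{\alpha}$ sending $x=\sum_{j=0}^{\alpha-1}x_{j}2^{j}$ to its vector of binary digits $(x_{0},\ldots,x_{\alpha-1})$. It is not a group homomorphism, because of carries, so the task is to check directly that it is a graph isomorphism onto $X(C)$. By Lemma \ref{2a}, adjacency in $X(C)$ means $\phi(x)+\phi(y)\in C$, i.e. $x_{j}=y_{j}$ for every $j<\beta$ and $x_{\beta}\neq y_{\beta}$; so what must be shown is that this condition is equivalent to $x-y\in S_{\Zl_{2^{\alpha}}}(2^{\beta})$, that is, to $v_{2}(x-y)=\beta$ after reducing $x-y$ modulo $2^{\alpha}$. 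Now $2^{\beta}\mid(x-y)$ is the same as $x\equiv y\pmod{2^{\beta}}$, which is agreement of the lowest $\beta$ digits; and under that hypothesis the identity $x-y=2^{\beta}\bigl((x_{\beta}-y_{\beta})+2(\cdots)\bigr)$ shows that $2^{\beta+1}\nmid(x-y)$ exactly when $x_{\beta}\neq y_{\beta}$. One also notes that reduction modulo $2^{\alpha}$ does not change $v_{2}$ here, since $0<|x-y|<2^{\alpha}$ forces $v_{2}(x-y)<\alpha$ whenever $x\neq y$. Composing $\psi$ with $\phi^{-1}\times\mathrm{id}$ then delivers the isomorphism in the statement.

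I expect the only delicate point to be the carry bookkeeping in the last paragraph, namely keeping straight the passage between the $2$-adic valuation of the integer $x-y$ and that of its residue modulo $2^{\alpha}$; note also that the statement implicitly assumes $\beta<\alpha$, since $C$ refers to the coordinate of index $\beta$, so no boundary case intervenes. The Chinese Remainder splitting and the Kronecker-product identification are entirely routine.
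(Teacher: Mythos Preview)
Your argument is correct and follows essentially the same route as the paper: both use the Chinese Remainder decomposition $\Zl_{n}\cong\Zl_{2^{\alpha}}\oplus\Zl_{m}$ together with the binary-digit bijection $\Zl_{2^{\alpha}}\to\Zl_{2}^{\alpha}$, and both reduce adjacency to the condition that the lowest $\beta$ bits agree while the $\beta$-th bit differs. You are somewhat more explicit than the paper about the carry issue (the paper simply asserts the equivalence $\gcd(x'-y',2^{\alpha})=2^{\beta}\Leftrightarrow x'_{j}=y'_{j}$ for $j<\beta$ and $x'_{\beta}\neq y'_{\beta}$ without spelling out why subtraction and bitwise XOR agree on the relevant coordinates), and you factor the map through the intermediate graph $Cay(\Zl_{2^{\alpha}},S_{\Zl_{2^{\alpha}}}(2^{\beta}))$, but neither difference is substantive.
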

\begin{proof}
Assume that $G=X(C)\times Cay(\Zl_{m},S_{\Zl_{m}}(m'))$. As $gcd(2^{\alpha},m)=1$, by Chinese reminder theorem, every $z\in \Zl_{n}$ is uniquely determined by the congruences
$$z\equiv z'\; \left(mod\;2^{\alpha}\right),\; z\equiv z''\;\left(mod\; m\right).$$
Again, for every $z'\in\Zl_{2^{\alpha}}$, we have a unique 2-adic representation
\begin{eqnarray*}
z'=\sum\limits_{j=0}^{\alpha-1}z'_{j}2^{j}, \text{ where } z'_{j}\in\left\lbrace 0,1 \right\rbrace \text{ for } j=0,1,\ldots,\alpha-1.
\end{eqnarray*}
This leads to a bijection between the vertices of the graphs $Cay(\Zl_{n},S_{\Zl_{n}}(d))$ and $G$ which is given by $z\mapsto (\mathbf{z}',z'')$, where $\mathbf{z}'=(z'_{0},z'_{1},\ldots,z'_{\alpha-1})$. We claim that this is indeed an isomorphism. The vertices $x, y$ in $Cay(\Zl_{n},S_{\Zl_{n}}(d))$ are adjacent if and only if $gcd(x-y,n)=d$, which is equivalent to $gcd(x'-y',2^{\alpha})=2^{\beta}$ and $gcd(x''-y'',m)=m'$. Again $gcd(x'-y',2^{\alpha})=2^{\beta}$ if and only if $x'_{j}-y'_{j}=0$ for every $j<\beta$ and $x'_{\beta}-y'_{\beta}=1$ \emph{i.e.} $\mathbf{x}'-\mathbf{y}'\in C$. Hence $x$ and $y$ are adjacent in $Cay(\Zl_{n},S_{\Zl_{n}}(d))$ if and only if $(\mathbf{x}',x'')$ and $(\mathbf{y}',y'')$ are adjacent in $G$. This proves our claim. 
\end{proof}

In the next two lemma, we construct a class of integral circulant graphs which are periodic. These graphs are not necessarily connected. We then use Proposition \ref{3a} to construct periodic integral circulants which are also connected. 
\begin{lem}\label{3b}
If $n,d\in\Nl$ is such that $\frac{n}{d}\in8\Nl$ then the transition matrix of the integral circulant graph $ICG_{n}(\left\lbrace d\right\rbrace)$ at $\frac{\pi}{2}$ is the identity matrix. 
\end{lem}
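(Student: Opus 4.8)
The plan is to recognise $ICG_{n}(\{d\})$ as a Kronecker product of a cubelike graph with an integral circulant graph via Proposition \ref{1c}, and then apply Proposition \ref{2d}. First I would fix the $2$-adic data: write $n=2^{\alpha}m$ and $d=2^{\beta}m'$ with $m,m'$ odd. Since $n/d=2^{\alpha-\beta}\cdot(m/m')$ and $m/m'$ is odd, the hypothesis $n/d\in 8\Nl$ is equivalent to $\alpha-\beta\geq 3$. In particular $n/d\geq 8$, so $d$ is a proper divisor of $n$ (and $m'\mid m$), whence Proposition \ref{1c} applies and gives
$$ICG_{n}(\{d\})=Cay(\Zl_{n},S_{\Zl_{n}}(d))\cong X(C)\times Cay(\Zl_{m},S_{\Zl_{m}}(m')),$$
where $C=\{(c_{0},\ldots,c_{\alpha-1})\in\Zl_{2}^{\alpha}:c_{j}=0\text{ for every }j<\beta,\ c_{\beta}=1\}$.

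Next I would verify that $C$ meets the two hypotheses of Proposition \ref{2d}. A tuple in $C$ has $\beta+1$ coordinates fixed (namely $c_{0},\ldots,c_{\beta-1}$ fixed to $0$ and $c_{\beta}$ fixed to $1$) and the remaining $\alpha-\beta-1$ coordinates free, so $|C|=2^{\alpha-\beta-1}$; since $\alpha-\beta\geq 3$ this gives $|C|\equiv 0\ (mod\ 4)$. For the sum $\sigma$ of the elements of $C$: in each of the $\alpha-\beta-1$ free coordinates exactly half of the $2^{\alpha-\beta-1}$ tuples carry a $1$, so that coordinate of $\sigma$ equals $2^{\alpha-\beta-2}\equiv 0\ (mod\ 2)$; the coordinate indexed $\beta$ carries a $1$ in all $|C|=2^{\alpha-\beta-1}$ tuples, again an even contribution; and the coordinates indexed below $\beta$ vanish identically. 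Hence $\sigma=\o$.

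Finally, $Cay(\Zl_{m},S_{\Zl_{m}}(m'))$ is a $\mathit{gcd}$-graph over the cyclic group $\Zl_{m}$, hence integral by Theorem \ref{1a}. Applying Proposition \ref{2d} with this graph in the role of $G$, the transition matrix of $X(C)\times Cay(\Zl_{m},S_{\Zl_{m}}(m'))$ at $\frac{\pi}{2}$ is the identity matrix; and since isomorphic graphs have transition matrices that are conjugate by a permutation matrix (the conjugate of $I$ being $I$), the transition matrix of $ICG_{n}(\{d\})$ at $\frac{\pi}{2}$ is also the identity. I expect no serious obstacle here: the only points requiring care are the valuation bookkeeping that converts $n/d\in 8\Nl$ into $\alpha-\beta\geq 3$ and the parity count establishing $\sigma=\o$; once these are in place the lemma follows directly from Propositions \ref{1c} and \ref{2d}.
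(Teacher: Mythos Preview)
Your proposal is correct and follows essentially the same route as the paper's own proof: factor $n$ and $d$ into their $2$-parts, invoke Proposition~\ref{1c} to realise $ICG_{n}(\{d\})$ as $X(C)\times Cay(\Zl_{m},S_{\Zl_{m}}(m'))$, check that $|C|\equiv 0\ (mod\ 4)$ and $\sigma=\o$, and conclude via Proposition~\ref{2d}. Your write-up is in fact more careful than the paper's, which simply asserts the size and sum conditions on $C$ without the explicit counts you provide.
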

 
\begin{proof}
By our assumption, we can write $n=2^{\alpha}m$, where $m$ is an odd positive integer and $\alpha\geq 3$. If $\frac{n}{d}\in8\Nl$ then $d=2^{\beta}m'$ for some $\beta\leq\alpha-3$. By using Proposition \ref{1c} we find that $ICG_{n}(\left\lbrace d\right\rbrace)$ is isomorphic to $X(C)\times Cay(\Zl_{m},S_{\Zl_{m}}(m'))$, where the connecting set $C$ in $X(C)$ is given by $$C=\left\lbrace \left(c_{0},\ldots, c_{\beta},\ldots,c_{\alpha-1}\right): c_{j}=0 \text{ for every } j<\beta\leq\alpha-3 \text{ and } c_{\beta}=1\right\rbrace.$$ It is clear that $|C|\equiv0\;(mod\;4)$ and $\sum\limits_{c\in C}c=\o$. Hence, by Proposition \ref{2d}, the transition matrix of $ICG_{n}(\left\lbrace d\right\rbrace)$ at $\frac{\pi}{2}$ is the identity matrix.
\end{proof}

\begin{lem}\label{3c}
If $n,d\in\Nl$ be such that $\frac{n}{d}\in8\Nl-4$ then the transition matrix of the integral circulant graph $ICG_{n}(\left\lbrace d,2d,4d\right\rbrace)$ at $\frac{\pi}{2}$ is the identity matrix.
\end{lem}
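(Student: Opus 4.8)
The plan is to imitate the proof of Proposition~\ref{1c}: realise $ICG_n(\{d,2d,4d\})$ as a Kronecker product of a cubelike graph with an integral circulant graph, and then invoke Proposition~\ref{2d}. First I would unwind the hypothesis $\frac{n}{d}\in 8\Nl-4$. Writing $n=2^{\alpha}m$ with $m$ odd, it says the $2$-part of $n/d$ equals $2^2$, hence $\alpha\geq 2$ and $d=2^{\alpha-2}m'$ for some odd divisor $m'$ of $m$. Consequently $d=2^{\alpha-2}m'$, $2d=2^{\alpha-1}m'$ and $4d=2^{\alpha}m'$ are all divisors of $n$, the gcd-sets $S_{\Zl_n}(d)$, $S_{\Zl_n}(2d)$, $S_{\Zl_n}(4d)$ are pairwise disjoint, and $S:=S_{\Zl_n}(d)\cup S_{\Zl_n}(2d)\cup S_{\Zl_n}(4d)$ is a symmetric connection set with $ICG_n(\{d,2d,4d\})=Cay(\Zl_n,S)$.

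Next, via the Chinese remainder isomorphism $\Zl_n\cong\Zl_{2^{\alpha}}\oplus\Zl_m$ followed by the $2$-adic expansion of the first coordinate, I would take the vertex bijection $z\mapsto(\mathbf z',z'')$ with $\mathbf z'=(z'_0,\dots,z'_{\alpha-1})\in\Zl_2^{\alpha}$, exactly as in the proof of Proposition~\ref{1c}, and check that it is a graph isomorphism
$$Cay(\Zl_n,S)\;\cong\;X(C)\times Cay(\Zl_m,S_{\Zl_m}(m')),\qquad C=\{\o,\,e_{\alpha-2},\,e_{\alpha-1},\,e_{\alpha-2}+e_{\alpha-1}\}\subseteq\Zl_2^{\alpha},$$
where $e_j$ is the $j$-th standard basis vector. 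The verification is the same computation as in Proposition~\ref{1c}: now $\gcd(x'-y',2^{\alpha})\in\{2^{\alpha-2},2^{\alpha-1},2^{\alpha}\}$ if and only if $x'_j=y'_j$ for every $j<\alpha-2$, i.e. $\mathbf x'-\mathbf y'\in C$, and this together with $\gcd(x''-y'',m)=m'$ is precisely adjacency in $X(C)\times Cay(\Zl_m,S_{\Zl_m}(m'))$.

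Finally I would note that $C$ meets the two hypotheses of Proposition~\ref{2d}: $|C|=4\equiv 0\;(mod\;4)$, and $\sum_{c\in C}c=2e_{\alpha-2}+2e_{\alpha-1}=\o$ in $\Zl_2^{\alpha}$. Since $Cay(\Zl_m,S_{\Zl_m}(m'))$ is a circulant graph defined by a single gcd-set, it is integral by Theorem~\ref{1a}; so Proposition~\ref{2d} applies with $G=Cay(\Zl_m,S_{\Zl_m}(m'))$ and gives that the transition matrix of $X(C)\times Cay(\Zl_m,S_{\Zl_m}(m'))$ at $\frac{\pi}{2}$ is the identity. By the isomorphism above the same holds for $ICG_n(\{d,2d,4d\})$.

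The step I expect to be the main obstacle is the divisor $4d=2^{\alpha}m'$, for which $n/(4d)$ is odd: Proposition~\ref{1c} as stated handles a divisor $2^{\beta}m'$ with $\beta\leq\alpha-1$, and this borderline case (where $\gcd(x'-y',2^{\alpha})$ can attain the full value $2^{\alpha}$) shows up as the loop vector $\o\in C$. For this reason I would avoid splitting into $d$, $2d$, $4d$ and multiplying three transition matrices through Proposition~\ref{3a}, since none of the three individual factors is the identity at $\frac{\pi}{2}$; it is cleaner to build the single isomorphism directly. One then has to keep track of the convention that a loop contributes $1$ to the diagonal, i.e. that the summand of the adjacency matrix of $X(C)$ coming from $\o\in C$ is $P_{\o}=I$, so that the formula $H_C\!\left(\frac{\pi}{2}\right)=i^{|C|}P_{\sigma}$ underlying Proposition~\ref{2d} stays valid here.
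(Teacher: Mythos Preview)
Your proof is correct and follows essentially the same route as the paper: the paper also writes $n=2^{\alpha}m$ with $d=2^{\alpha-2}k$, obtains the isomorphism $ICG_{n}(\{d,2d,4d\})\cong X(C)\times Cay(\Zl_m,S_{\Zl_m}(k))$ with the very same four-element set $C=\{\o,e_{\alpha-2},e_{\alpha-1},e_{\alpha-2}+e_{\alpha-1}\}$, and then applies Proposition~\ref{2d}. The only cosmetic difference is that the paper reaches this $C$ by invoking Proposition~\ref{1c} separately for $d$, $2d$, $4d$ (using the same bijection each time) and taking the union $C=C_1\cup C_2\cup C_3$, whereas you verify the adjacency condition for the full connection set in one step; your direct verification is arguably cleaner precisely because, as you note, the case $4d=2^{\alpha}m'$ lies at the boundary of Proposition~\ref{1c} as stated.
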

\begin{proof}
By our assumption, we can write $n=2^{\alpha}m$ where $m>1$ is an odd positive integer and $\alpha\geq 2$. If $\frac{n}{d}\in8\Nl-4$ then we have $d=2^{\alpha-2}k$, where $k$ divides $m$. Let $C_{1}=\left\lbrace (0,\ldots,0,1,0), (0,\ldots,0,1,1)\right\rbrace,\;C_{2}=\left\lbrace (0,0,\ldots,0,0,1)\right\rbrace,\;C_{3}=\left\lbrace (0,0,\ldots,0,0,0)\right\rbrace$. By Proposition \ref{1c}, we have the following isomorphisms.
\begin{enumerate}
\item $ICG_{n}(\left\lbrace 2^{\alpha-2}k\right\rbrace)\cong X(C_{1})\times Cay(\Zl_{m},S_{\Zl_{m}}(k)),$
\item $ICG_{n}(\left\lbrace 2^{\alpha-1}k\right\rbrace)\cong X(C_{2})\times Cay(\Zl_{m},S_{\Zl_{m}}(k)),$
\item $ICG_{n}(\left\lbrace 2^{\alpha}k\right\rbrace)\cong X(C_{3})\times Cay(\Zl_{m},S_{\Zl_{m}}(k)).$
\end{enumerate}
All three isomorphisms are caused by the same map that is described in the proof of Proposition \ref{1c}. Now consider $C=C_{1}\cup C_{2}\cup C_{3}$. We therefore have the following isomorphism $$ICG_{n}(\left\lbrace 2^{\alpha-2}k,2^{\alpha-1}k,2^{\alpha}k\right\rbrace)\cong X(C)\times Cay(\Zl_{m},S_{\Zl_{m}}(k)).$$ Note that the set $C$ contains exactly $4$ elements and also sum of the elements in $C$ is $\o$. Hence, by Proposition \ref{2d}, the transition matrix of the graph $ICG_{n}(\left\lbrace d,2d,4d\right\rbrace)$ at $\frac{\pi}{2}$ is the identity matrix.
\end{proof}
Recall that $D_n$ is the set of all positive divisors of $n$. For $n\in4\Nl$, suppose $\mathscr{D}_n$ is the collection of $D\subset D_n$ such that
\begin{enumerate}
\item $D=\tilde{D}\cup D'\cup 2D'\cup 4D'$ where $\tilde{D}\subseteq\left\lbrace d\in D_n: \frac{n}{d}\in8\Nl\right\rbrace$ and $D'\subseteq\left\lbrace d\in D_n: \frac{n}{d}\in 8\Nl-4\right\rbrace$,
\item $D$ generates the group $\Zl_n$.
\end{enumerate}
For example, we can consider $D=\left\lbrace 1,2,4\right\rbrace$ for $n\in8\Nl-4$. Also for $n\in8\Nl$, we can choose $D=\left\lbrace 1\right\rbrace$. It is now easy to see that, for large values of $n\in 4\Nl$, there are many such sets $D\in\mathscr{D}_n$. Notice that if $D\in\mathscr{D}_n$ then the graph $ICG_{n}(D)$ is connected as the set $D$ generates the group $\Zl_n$. The following result finds connected integral circulant graphs having periodicity.

\begin{theorem}\label{3d}
If $n\in4\Nl$ and $D\in\mathscr{D}_n$ then the transition matrix of the integral circulant graph $ICG_{n}(D)$ at $\frac{\pi}{2}$ is the identity matrix.
\end{theorem}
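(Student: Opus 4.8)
The plan is to split the connection set of $ICG_n(D)$ into pieces that Lemmas~\ref{3b} and~\ref{3c} already handle, and then recombine the corresponding transition matrices via Proposition~\ref{3a}. Write $S=S_{\Zl_n}(D)=\bigcup_{d\in D}S_{\Zl_n}(d)$; since the sets $S_{\Zl_n}(d)$ are pairwise disjoint for distinct divisors $d$, this is a disjoint union indexed by $D$. The first step is to observe that $D=\tilde D\cup D'\cup 2D'\cup 4D'$ is in fact the \emph{disjoint} union of $\tilde D$ together with the triples $\{d,2d,4d\}$, one for each $d\in D'$. Indeed, every $d\in\tilde D$ has $\frac nd\equiv 0\pmod 8$, whereas for $d\in D'$ one has $\frac nd\equiv 4\pmod 8$; hence in that case $\frac n{2d}\equiv 2\pmod 4$ and $\frac n{4d}$ is odd, and in particular $2d$ and $4d$ are genuine divisors of $n$. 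Comparing the $2$-adic valuations of $d,2d,4d$ and of the elements of $\tilde D$ then shows that $\tilde D$ is disjoint from every triple, that distinct triples are disjoint, and that the three entries of each triple are distinct. Consequently $S$ is the disjoint union of $\bigcup_{d\in\tilde D}S_{\Zl_n}(d)$ together with the sets $S_{\Zl_n}(\{d,2d,4d\})$ for $d\in D'$.

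Next I would apply Proposition~\ref{3a} repeatedly. Since all graphs involved are Cayley graphs over the abelian group $\Zl_n$, Lemma~\ref{3ab} (and the remark following it) shows that their adjacency matrices pairwise commute, so the transition matrices $H_{ICG_n(\{d\})}(t)$ commute and may be regrouped freely. Applying Proposition~\ref{3a} to the disjoint decomposition above, and once more in the form $H_{ICG_n(\{d,2d,4d\})}(t)=H_{ICG_n(\{d\})}(t)\,H_{ICG_n(\{2d\})}(t)\,H_{ICG_n(\{4d\})}(t)$ to recognise the triple blocks, yields
$$H_{ICG_n(D)}(t)=\left(\prod_{d\in\tilde D}H_{ICG_n(\{d\})}(t)\right)\left(\prod_{d\in D'}H_{ICG_n(\{d,2d,4d\})}(t)\right).$$

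Finally I would evaluate at $t=\frac\pi2$. For $d\in\tilde D$ we have $\frac nd\in8\Nl$, so Lemma~\ref{3b} gives $H_{ICG_n(\{d\})}\!\left(\frac\pi2\right)=I$; for $d\in D'$ we have $\frac nd\in8\Nl-4$, so Lemma~\ref{3c} gives $H_{ICG_n(\{d,2d,4d\})}\!\left(\frac\pi2\right)=I$. Hence every factor on the right-hand side above is the identity, and therefore $H_{ICG_n(D)}\!\left(\frac\pi2\right)=I$. Note that condition~(2) in the definition of $\mathscr D_n$ is not used here; it only guarantees connectedness of $ICG_n(D)$. The one thing that requires genuine care is the valuation bookkeeping of the first paragraph, which is what makes the pieces disjoint so that Proposition~\ref{3a} applies cleanly; once Lemmas~\ref{3b} and~\ref{3c} are in hand the theorem is essentially an assembly of them, so I do not anticipate a substantial obstacle beyond that.
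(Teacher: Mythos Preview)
Your proposal is correct and follows essentially the same approach as the paper: decompose $D$ into the singletons from $\tilde D$ and the triples $\{d,2d,4d\}$ from $D'$, invoke Lemmas~\ref{3b} and~\ref{3c} on the pieces, and recombine via Proposition~\ref{3a}. If anything, you are more careful than the paper, which tacitly assumes the disjointness of the pieces that you verify via the $2$-adic valuation argument.
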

\begin{proof}
Here $D\in\mathscr{D}_n$ and therefore we can write $D=\tilde{D}\cup D'\cup 2D'\cup 4D'$, where $\tilde{D}\subseteq\left\lbrace d\in D_n: \frac{n}{d}\in8\Nl\right\rbrace$ and $D'\subseteq\left\lbrace d\in D_n: \frac{n}{d}\in 8\Nl-4\right\rbrace$. For $d\in\tilde{D}$ and $d'\in D'$, assume that $H_d(t)$ and $ H_{\left\lbrace d',2d',4d'\right\rbrace}(t)$ are the transition matrices of $ICG_{n}(\left\lbrace d\right\rbrace)$ and $ICG_{n}(\left\lbrace d',2d',4d'\right\rbrace)$, respectively. Applying Lemma \ref{3b} to $ICG_{n}(\left\lbrace d\right\rbrace)$, we have $H_d\left(\frac{\pi}{2}\right)=I$. Again, by using Lemma \ref{3c}, we find that $ H_{\left\lbrace d',2d',4d'\right\rbrace}\left(\frac{\pi}{2}\right)=I$. By Proposition \ref{3a}, the transition matrix of $ICG_{n}(D)$ at $\frac{\pi}{2}$ can be calculated as $$H_D\left(\frac{\pi}{2}\right)=\prod\limits_{d\in\tilde{D}}H_d\left(\frac{\pi}{2}\right)\prod\limits_{d'\in D'}H_{\left\lbrace d',2d',4d'\right\rbrace}\left(\frac{\pi}{2}\right)=I.$$ This proves the result.
\end{proof}
Note here that some of the graphs mentioned in Theorem \ref{3d} has loops. For example, if we choose $n=4$ then $\mathscr{D}_4$ contains the set $D=\left\lbrace 1,2,4\right\rbrace$. Clearly the graph $ICG_{4}(D)$ has loops. So to have a loopless periodic graph in Theorem \ref{3d}, we make sure that $n\notin D$.\par
Now we consider $\mathit{gcd}$-graphs over abelian groups. We intend to find some periodic $\mathit{gcd}$-graphs and then we add extra edges to those graphs to have PST.
\begin{theorem}\label{3f}
Let $\Gamma=\Zl_{m_1}\oplus\ldots\oplus\Zl_{m_r}$ be such that $m_i\equiv 0\;(mod\;4)$, for some $i=1,\ldots,r$. For $D\in\mathscr{D}_{m_i}$, consider the set $\D=\left\lbrace\left(1,\ldots,d_i,\ldots,1\right):d_i\in D\right\rbrace$. Then the graph $Cay(\Gamma,S_{\Gamma}(\D))$ is periodic at $\frac{\pi}{2}$.
\end{theorem}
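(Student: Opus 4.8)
The plan is to realise $Cay(\Gamma,S_\Gamma(\D))$ as a Kronecker product one of whose factors is an integral circulant graph of the kind handled by Theorem \ref{3d}, the remaining factors being integral, and then to rerun the computation from the proof of Proposition \ref{2d}. First I would check that the connection set splits across the cyclic summands of $\Gamma$: writing $\m=(m_1,\dots,m_r)$, a vertex $\x=(x_1,\dots,x_r)$ lies in $S_\Gamma\bigl((1,\dots,d_i,\dots,1)\bigr)$ precisely when $\gcd(x_j,m_j)=1$ for every $j\neq i$ and $\gcd(x_i,m_i)=d_i$, and since $D$ varies only the $i$-th coordinate, taking the union over $d_i\in D$ gives
\[S_\Gamma(\D)=S_{\Zl_{m_1}}(1)\times\cdots\times S_{\Zl_{m_i}}(D)\times\cdots\times S_{\Zl_{m_r}}(1),\]
with $S_{\Zl_{m_i}}(D)=\bigcup_{d\in D}S_{\Zl_{m_i}}(d)$. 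Since two vertices of a Cayley graph over a direct sum are adjacent exactly when each coordinate difference lies in the corresponding factor of a product connection set, this yields the graph isomorphism
\[Cay(\Gamma,S_\Gamma(\D))\;\cong\;ICG_{m_i}(D)\times\Bigl(\,\prod_{j\neq i}Cay\bigl(\Zl_{m_j},S_{\Zl_{m_j}}(1)\bigr)\Bigr),\]
the right-hand products being Kronecker products of graphs.

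Next I would note that each factor $Cay(\Zl_{m_j},S_{\Zl_{m_j}}(1))$ with $j\neq i$ is a $\mathit{gcd}$-graph over a cyclic group, hence integral (Theorem \ref{1a}); as the eigenvalues of a Kronecker product are products of the factors' eigenvalues, the graph $G:=\prod_{j\neq i}Cay(\Zl_{m_j},S_{\Zl_{m_j}}(1))$ is integral. On the other side, $m_i\in4\Nl$ and $D\in\mathscr{D}_{m_i}$, so Theorem \ref{3d} applies to $ICG_{m_i}(D)$, and its transition matrix $H_D$ satisfies $H_D\!\left(\tfrac{\pi}{2}\right)=I$; hence $H_D\!\left(\tfrac{\pi\mu}{2}\right)=H_D\!\left(\tfrac{\pi}{2}\right)^{\mu}=I$ for every $\mu\in\Zl$.

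Finally I would transcribe the argument of Proposition \ref{2d} with $ICG_{m_i}(D)$ in place of the cubelike graph: writing the spectral decomposition of the adjacency matrix of $G$ as $\sum_{s}\mu_s F_s$ (all $\mu_s\in\Zl$, $\sum_s F_s=I$), Proposition \ref{aa} gives that the transition matrix of $Cay(\Gamma,S_\Gamma(\D))$ at $\tfrac{\pi}{2}$ equals $\sum_s H_D\!\left(\tfrac{\pi\mu_s}{2}\right)\otimes F_s=\sum_s I\otimes F_s=I\otimes I=I$, so the graph is periodic at $\tfrac{\pi}{2}$ with $\gamma=1$. I expect the one genuinely delicate step to be the first: checking carefully that the $\mathit{gcd}$-set over $\Gamma$ really equals the product of $\mathit{gcd}$-sets in the cyclic coordinates, so that the Kronecker-product structure is forced. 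Everything after that is routine, the only thing worth remarking being that the proof of Proposition \ref{2d} uses the cubelike hypothesis solely through the identity $H\!\left(\tfrac{\pi}{2}\right)=I$, which here is supplied by Theorem \ref{3d}.
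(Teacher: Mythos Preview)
Your proposal is correct and follows essentially the same route as the paper: both identify $Cay(\Gamma,S_\Gamma(\D))$ as a Kronecker product $ICG_{m_i}(D)\times G$ with $G$ integral, invoke Theorem~\ref{3d} to get $H_D(\tfrac{\pi}{2})=I$, and finish via Proposition~\ref{aa} exactly as in Proposition~\ref{2d}. The only cosmetic difference is that the paper lumps the remaining coordinates into a single $\mathit{gcd}$-graph $Cay(\Gamma',S_{\Gamma'}(\{(1,\dots,1)\}))$ and appeals to integrality of $\mathit{gcd}$-graphs, whereas you further split $G$ into the Kronecker product $\prod_{j\neq i}Cay(\Zl_{m_j},S_{\Zl_{m_j}}(1))$ and use Theorem~\ref{1a} on each cyclic factor; the two descriptions of $G$ coincide, so there is no substantive divergence.
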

\begin{proof}
It is enough to prove the result for $i=1$. Suppose that $$\Gamma'=\Zl_{m_2}\oplus\ldots\oplus\Zl_{m_r}.$$ We first show that $$Cay(\Gamma,S_{\Gamma}(\D))=ICG_{m_1}(D)\times Cay(\Gamma',S_{\Gamma'}(\left\lbrace \left(1,\ldots,1\right)\right\rbrace)).$$ Note that both these graphs have the same set of vertices. Also assume that $\x, \y\in\Gamma$, where $\x=\left(x_1,\ldots,x_r\right)$ and $\y=\left(y_1,\ldots,y_r\right)$. If $\d=\left(d_1,\ldots,d_r\right)\in\D$ then we have $gcd(\x-\y,\m)=\d$ if and only if $gcd(x_j - y_j,m_j)=d_j$ for each $j={1,\ldots,r}$. So $\x$ and $\y$ are adjacent in $Cay(\Gamma,S_{\Gamma}(\D))$ if and only if $x_1$ is adjacent to $y_1$ in $ICG_{m_1}(D)$ and $\left(x_2,\ldots,x_r\right)$ is adjacent to $\left(y_2,\ldots,y_r\right)$ in $Cay(\Gamma',S_{\Gamma'}(\left\lbrace \left(1,\ldots,1\right)\right\rbrace))$. Consequently, the vertices $\x$ and $\y$ are adjacent in $ICG_{m_1}(D)\times Cay(\Gamma',S_{\Gamma'}(\left\lbrace\left(1,\ldots,1\right)\right\rbrace))$.\par Suppose the spectral decomposition of adjacency matrix of the graph $Cay(\Gamma',S_{\Gamma'}(\left\lbrace \left(1,\ldots,1\right)\right\rbrace))$ is $\sum\limits_{s=1}^{q}\mu_{s}F_{s}$. Since $Cay(\Gamma',S_{\Gamma'}(\left\lbrace \left(1,\ldots,1\right)\right\rbrace))$ is a $\mathit{gcd}$-graph, it has integral spectrum. Therefore the eigenvalus $\mu_s$ are integers for $s=1,\ldots,q$. If $H(t)$ is the transition matrix of $ICG_{m_1}(D)$ then by using Theorem \ref{3d}, we have $H(\frac{\pi}{2})=I$. Again applying Theorem \ref{aa}, the transition matrix of $ICG_{m_1}(D)\times Cay(\Gamma',S_{\Gamma'}(\left\lbrace\left(1,\ldots,1\right)\right\rbrace))$ at $\frac{\pi}{2}$ can be evaluated as
$$\sum\limits_{s=1}^{q} H\left(\frac{\pi\mu_{s}}{2}\right)\otimes F_{s}= \sum\limits_{s=1}^{q} H\left(\frac{\pi}{2}\right)^{\mu_{s}}\otimes F_{s}=I\otimes\sum\limits_{s=1}^{q}F_s=I.$$ This implies that the graph $Cay(\Gamma,S_{\Gamma}(\D))$ is periodic at $\frac{\pi}{2}$.
\end{proof}
We have constructed a class of $\mathit{gcd}$-graphs which are periodic. Now we add some edges to these graphs to create a class of $\mathit{gcd}$-graphs allowing PST. 
\begin{theorem}\label{3e}
If the conditions of Theorem \ref{3f} are satisfied then perfect state transfer occurs in the cayley graphs with gcd-sets $\D\cup\left\lbrace(m_1,\ldots,\frac{m_i}{2},\ldots,m_r)\right\rbrace$ as well as $\D\cup\left\lbrace(m_1,\ldots,\frac{m_i}{4},\ldots,m_r)\right\rbrace$.
\end{theorem}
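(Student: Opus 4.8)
The plan is to split the new connection set as $S_\Gamma(\D\cup\{\d_0\})=S_\Gamma(\D)\sqcup S_\Gamma(\d_0)$, where $\d_0$ denotes the single extra divisor tuple (that is, $(m_1,\ldots,\tfrac{m_i}{2},\ldots,m_r)$ in the first case and $(m_1,\ldots,\tfrac{m_i}{4},\ldots,m_r)$ in the second), to factor the transition matrix through Proposition~\ref{3a}, and to use Theorem~\ref{3f} to kill the $S_\Gamma(\D)$-factor at time $\tfrac{\pi}{2}$. As in Theorem~\ref{3f} I would take $i=1$ and write $m_1=2^{\alpha}m$ with $m$ odd and $\alpha\geq 2$, so that $\tfrac{m_1}{4}$ is a genuine divisor of $m_1$.

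First I would pin down $S_\Gamma(\d_0)$ by a gcd-computation. Any $\x\in S_\Gamma(\d_0)$ must have $x_j=0$ for $j\geq 2$, while the condition on $x_1$ forces $x_1=\tfrac{m_1}{2}$ in the first case and $x_1=\pm\tfrac{m_1}{4}$ in the second. Hence $S_\Gamma(\d_0)=\{\sigma\}$ with $\sigma=(\tfrac{m_1}{2},0,\ldots,0)$ an involution in the first case, and $S_\Gamma(\d_0)=\{\sigma,-\sigma\}$ with $\sigma=(\tfrac{m_1}{4},0,\ldots,0)$ of order $4$ (so $\sigma\neq-\sigma$ and $2\sigma=(\tfrac{m_1}{2},0,\ldots,0)\neq\o$) in the second. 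In both cases the set is symmetric, avoids $\o$, and is disjoint from $S_\Gamma(\D)$ since $\d_0\notin\D$ (an element of $S_\Gamma(\D)$ has each coordinate $j\geq 2$ coprime to $m_j$). Thus Proposition~\ref{3a} gives $H_{\D\cup\{\d_0\}}(t)=H_\D(t)\,H_{\{\d_0\}}(t)$, and by Theorem~\ref{3f} we have $H_\D(\tfrac{\pi}{2})=I$, so $H_{\D\cup\{\d_0\}}(\tfrac{\pi}{2})=H_{\{\d_0\}}(\tfrac{\pi}{2})$.

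It then remains to compute $H_{\{\d_0\}}(\tfrac{\pi}{2})$, the transition matrix of $Cay(\Gamma,S_\Gamma(\d_0))$. Writing $P_\sigma$ for the permutation matrix of $x\mapsto x+\sigma$ on $\Gamma$ (so $P_\sigma^k=P_{k\sigma}$), the adjacency matrix of $Cay(\Gamma,S_\Gamma(\d_0))$ is $P_\sigma$ in the first case and $B=P_\sigma+P_\sigma^{-1}$ in the second. In the first case $P_\sigma^2=I$, so the expansion used for cubelike graphs gives $\exp(i\tfrac{\pi}{2}P_\sigma)=\cos\tfrac{\pi}{2}\,I+i\sin\tfrac{\pi}{2}\,P_\sigma=iP_\sigma$. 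In the second case $P_\sigma^4=I$, which forces $B^3=4B$, so $B$ is diagonalizable with spectrum contained in $\{0,\pm 2\}$; evaluating $\exp(i\tfrac{\pi}{2}B)$ eigenspace by eigenspace and using that the span of the $\pm2$-eigenspaces is the $(+1)$-eigenspace of the involution $P_\sigma^2$, cut out by $\tfrac{1}{2}(I+P_\sigma^2)$, one obtains $\exp(i\tfrac{\pi}{2}B)=I-2\cdot\tfrac{1}{2}(I+P_\sigma^2)=-P_{2\sigma}$. Hence $H_{\D\cup\{\d_0\}}(\tfrac{\pi}{2})$ equals $iP_\sigma$ in the first case and $-P_{2\sigma}$ in the second. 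In either case this is a unit-modulus scalar times the permutation matrix $P_\tau$ of the nonzero involution $\tau$ (namely $\tau=\sigma$, resp.\ $\tau=2\sigma$), so for every vertex $u$ the $(u,u+\tau)$-entry of the transition matrix has modulus $1$; therefore PST occurs from $u$ to $u+\tau$ at time $\tfrac{\pi}{2}$.

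The only step beyond bookkeeping is the exponentiation in the $\tfrac{m_1}{4}$-case: since $P_\sigma$ has order $4$ rather than $2$, the power series of $\exp(i\tfrac{\pi}{2}P_\sigma)$ no longer collapses to two terms, and one must read off the spectrum of $P_\sigma+P_\sigma^{-1}$ and exponentiate on each eigenspace. Equivalently, $Cay(\Gamma,\{\sigma,-\sigma\})$ is a disjoint union of $4$-cycles, and one may instead invoke the classical fact that $C_4$ has perfect state transfer between antipodal vertices at time $\tfrac{\pi}{2}$, with phase $-1$. Everything else reduces to the gcd-arithmetic of the first step together with Proposition~\ref{3a} and Theorem~\ref{3f}.
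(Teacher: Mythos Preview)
Your proof is correct and follows the same overall strategy as the paper: split the connection set, factor the transition matrix via Proposition~\ref{3a}, kill the $S_\Gamma(\D)$-part at $\tfrac{\pi}{2}$ using Theorem~\ref{3f}, and show the remaining piece has PST at $\tfrac{\pi}{2}$. The only difference is in this last step: the paper observes that $Cay(\Gamma,S_\Gamma(\d_0))$ is a disjoint union of copies of $K_2$ (first case) or $C_4$ (second case) and simply quotes the classical PST for those graphs, whereas you compute $\exp(i\tfrac{\pi}{2}A)$ directly from $P_\sigma^2=I$, respectively from $B^3=4B$ and the identification of the $\pm2$-eigenspace with $\ker(P_\sigma^2-I)$. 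You already note this equivalence yourself at the end, so the two arguments are really the same with a more explicit computation on your side; your version has the mild bonus of exhibiting the phase and the PST partner $u\mapsto u+\tau$ explicitly.
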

\begin{proof}
Observe that the subgroup of $\Gamma$ generated by $(m_1,\ldots,\frac{m_i}{2},\ldots,m_r)$ is isomorphic with $\Zl_2$. The subgroup has $\frac{m_1\cdots m_r}{2}$ right cosets and therefore the Cayley graph over the gcd-set $\left\lbrace (m_1,\ldots,\frac{m_i}{2},\ldots,m_r)\right\rbrace$ has $\frac{m_1\cdots m_r}{2}$ components and each component is isomorphic with the complete graph $K_2$. The graph $K_{2}$ is known to have PST at $\frac{\pi}{2}$ and hence disjoint union of any number of copies of $K_{2}$ also exhibit PST. Hence, by Proposition \ref{3a}, the cayley graph over $\Gamma$ with the gcd-set $\D\cup\left\lbrace(m_1,\ldots,\frac{m_i}{2},\ldots,m_r)\right\rbrace$ admits PST.\par Similarly, the Cayley graph over the gcd-set $\left\lbrace (m_1,\ldots,\frac{m_i}{4},\ldots,m_r)\right\rbrace$ can be realized as the disjoint union of $\frac{m_1\cdots m_r}{4}$ copies of the cycle $C_{4}$. The graph $C_{4}$ is also known to have PST at $\frac{\pi}{2}$ and therefore PST occurs on disjoint union of any number of copies of $C_{4}$. Hence, by Proposition \ref{3a}, the cayley graph over $\Gamma$ with the gcd-set $\D\cup\left\lbrace(m_1,\ldots,\frac{m_i}{4},\ldots,m_r)\right\rbrace$ also exhibits PST.
\end{proof}
Thus we have constructed a class of $\mathit{gcd}$-graphs allowing PST apart from the circulants or cubelike graphs. The following result, which is a direct consequence of Theorem \ref{3e}, produces some integral circulant graphs exhibiting PST. In \cite{mil}, it was in fact shown that these are the only integral circulant graphs allowing PST.
\begin{cor}
Let $n\in 4\Nl$ and the set $D\in\mathscr{D}_n$ be such that $n\notin D$. Also consider $D'=D\cup\left\lbrace \frac{n}{2}\right\rbrace$ and $D''=D\cup\left\lbrace \frac{n}{4}\right\rbrace$. Then the graphs $ICG_{n}(D')$ and $ICG_{n}(D'')$ exhibit PST at $\frac{\pi}{2}$.
\end{cor}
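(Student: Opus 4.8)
The plan is to derive the corollary directly from Theorem \ref{3e} by taking $\Gamma$ to be the cyclic group $\Zl_n$ itself, that is, taking $r=1$ and $m_1 = n$. First I would check that the hypotheses of Theorem \ref{3f} (which are the hypotheses invoked in Theorem \ref{3e}) are met: we are given $n \in 4\Nl$, so $m_1 = n \equiv 0 \pmod 4$ with $i=1$, and we are given $D \in \mathscr{D}_n$, which is exactly the data Theorem \ref{3f} requires. Since $r=1$, the tuple construction collapses: $\D = \{(d_1) : d_1 \in D\}$ is naturally identified with $D$, and the gcd-graph $Cay(\Zl_n, S_{\Zl_n}(\D))$ is precisely the integral circulant graph $ICG_n(D)$.

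Next I would translate the two augmented gcd-sets of Theorem \ref{3e} into the language of the corollary. With $r=1$ and $i=1$, the tuple $(m_1,\ldots,\tfrac{m_i}{2},\ldots,m_r)$ is simply $(\tfrac{n}{2})$, and $(m_1,\ldots,\tfrac{m_i}{4},\ldots,m_r)$ is simply $(\tfrac{n}{4})$. Hence $\D \cup \{(\tfrac{n}{2})\}$ corresponds to $D \cup \{\tfrac{n}{2}\} = D'$ and $\D \cup \{(\tfrac{n}{4})\}$ corresponds to $D \cup \{\tfrac{n}{4}\} = D''$. The condition $n \notin D$ in the corollary is the hypothesis that ensures the underlying periodic graph $ICG_n(D)$ is loopless, as remarked after Theorem \ref{3d}; it is not strictly needed for the PST conclusion but is included so that the resulting graphs $ICG_n(D')$ and $ICG_n(D'')$ are simple graphs (one should also note $\tfrac{n}{2}, \tfrac{n}{4} \ne n$, so the added divisors do not themselves introduce loops). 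Applying Theorem \ref{3e} then yields PST at time $\tfrac{\pi}{2}$ in both $ICG_n(D')$ and $ICG_n(D'')$.

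There is essentially no obstacle here — the corollary is a specialization, and the only thing to be careful about is the bookkeeping that identifies the $r=1$ case of the tuple-indexed gcd-graph $Cay(\Gamma, S_\Gamma(\D))$ with the classical integral circulant graph $ICG_n(D)$, together with confirming that $D' , D'' \in D_n$ (i.e.\ $\tfrac{n}{2}$ and $\tfrac{n}{4}$ are genuine divisors of $n$, which holds since $4 \mid n$) so that $ICG_n(D')$ and $ICG_n(D'')$ are well-defined integral circulant graphs. One could also add a sentence pointing out that $D' , D''$ still generate $\Zl_n$ (since $D$ already does), so the graphs are connected and PST between genuinely distinct vertices is meaningful.

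\begin{proof}
Apply Theorems \ref{3f} and \ref{3e} with $r = 1$, $m_1 = n$, and $i = 1$. Since $n \in 4\Nl$, the condition $m_1 \equiv 0 \pmod 4$ holds, and $D \in \mathscr{D}_n$ provides the required set of divisors. With $r = 1$ the tuples reduce to scalars, so $\D$ is identified with $D$ and the gcd-graph $Cay(\Zl_n, S_{\Zl_n}(\D))$ is exactly $ICG_n(D)$. Likewise $\D \cup \{(\tfrac{n}{2})\}$ becomes $D \cup \{\tfrac{n}{2}\} = D'$ and $\D \cup \{(\tfrac{n}{4})\}$ becomes $D \cup \{\tfrac{n}{4}\} = D''$; here $\tfrac{n}{2}$ and $\tfrac{n}{4}$ are positive divisors of $n$ because $4 \mid n$, and both differ from $n$, so no loops are introduced, and since $n \notin D$ the graphs $ICG_n(D')$ and $ICG_n(D'')$ are simple. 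As $D$ generates $\Zl_n$, so do $D'$ and $D''$, hence these graphs are connected. Theorem \ref{3e} now gives that perfect state transfer occurs in both $ICG_n(D')$ and $ICG_n(D'')$ at time $\frac{\pi}{2}$.
\end{proof}
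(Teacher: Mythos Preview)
Your proposal is correct and matches the paper's approach: the paper does not give an explicit proof but states the corollary as ``a direct consequence of Theorem \ref{3e}'', and specializing that theorem to $r=1$, $m_1=n$, $i=1$ is precisely the intended reading. The extra bookkeeping you supply (identifying tuples with scalars, checking simplicity and connectedness) is all sound.
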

We now give a simple characterization of $\mathit{gcd}$-graphs allowing perfect state transfer. Let the group $\Gamma=\Zl_{m_1}\oplus\ldots\oplus\Zl_{m_r}$, $r>1$ and $m_i\equiv 0\;(mod\;4)$ for some $i$. If $m_i\in8\Nl$ then choose $D=\left\lbrace1\right\rbrace$, otherwise consider $D=\left\lbrace1,2,4\right\rbrace$. Note that $D\in\mathscr{D}_{m_i}$ and therefore applying Theorem \ref{3f} and Theorem \ref{3e}, we can conclude the following:
\begin{theorem}\label{3g}
If $\Gamma=\Zl_{m_1}\oplus\ldots\oplus\Zl_{m_r}$, $r>1$ and $m_i\equiv 0\;(mod\;4)$ for some $i=1,\ldots,r$ then there exist a set of divisors $\D$ such that $Cay(\Gamma,S_{\Gamma}(\D))$ exhibits Perfect state transfer.
\end{theorem}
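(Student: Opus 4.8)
The plan is to read the statement off Theorem~\ref{3f} and Theorem~\ref{3e} after producing a suitable divisor set at a coordinate $i$ where $m_i\equiv 0\;(mod\;4)$. First I would note that $m_i\in 4\Nl$, and writing $m_i=4k$ one has $m_i\in 8\Nl$ when $k$ is even and $m_i\in 8\Nl-4$ when $k$ is odd; so exactly one of the two cases $m_i\in 8\Nl$ and $m_i\in 8\Nl-4$ occurs. Accordingly I set $D=\{1\}$ in the first case and $D=\{1,2,4\}$ in the second (where $4\mid m_i$ ensures $2,4\in D_{m_i}$).

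The second step is to verify $D\in\mathscr{D}_{m_i}$. If $m_i\in 8\Nl$, then with $\tilde D=\{1\}$ and $D'=\emptyset$ one has $D=\tilde D\cup D'\cup 2D'\cup 4D'$ and $\tilde D\subseteq\{d\in D_{m_i}:\frac{m_i}{d}\in 8\Nl\}$, since $\frac{m_i}{1}=m_i\in 8\Nl$. If $m_i\in 8\Nl-4$, then with $\tilde D=\emptyset$ and $D'=\{1\}$ one has $D=D'\cup 2D'\cup 4D'$ and $D'\subseteq\{d\in D_{m_i}:\frac{m_i}{d}\in 8\Nl-4\}$, since $\frac{m_i}{1}=m_i\in 8\Nl-4$. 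In both cases $1\in D$, so $D$ generates $\Zl_{m_i}$; hence $D\in\mathscr{D}_{m_i}$.

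Finally, put $\D=\{(1,\ldots,d_i,\ldots,1):d_i\in D\}$, the tuples that equal $1$ in every coordinate other than the $i$-th and run over $D$ in the $i$-th. Since $m_i\equiv 0\;(mod\;4)$ and $D\in\mathscr{D}_{m_i}$, the hypotheses of Theorem~\ref{3f} hold, so $Cay(\Gamma,S_{\Gamma}(\D))$ is periodic at $\frac{\pi}{2}$; and then Theorem~\ref{3e}, whose hypotheses are exactly those of Theorem~\ref{3f}, shows that the Cayley graph over $\Gamma$ with gcd-set $\D\cup\{(m_1,\ldots,\frac{m_i}{2},\ldots,m_r)\}$ (equivalently $\D\cup\{(m_1,\ldots,\frac{m_i}{4},\ldots,m_r)\}$) exhibits perfect state transfer. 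Taking this enlarged collection as the required set of divisor tuples completes the argument.

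I do not anticipate a genuine obstacle: the substance is carried entirely by Theorems~\ref{3f} and~\ref{3e}, and the construction above only needs the elementary observation that $8\Nl$ and $8\Nl-4$ together exhaust $4\Nl$ (so one of the two explicit choices of $D$ is always admissible) together with the routine check that this $D$ has the form $\tilde D\cup D'\cup 2D'\cup 4D'$ demanded by the definition of $\mathscr{D}_{m_i}$.
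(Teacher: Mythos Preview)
Your proposal is correct and follows essentially the same approach as the paper: the paper's argument (given in the paragraph preceding the theorem) likewise splits into the cases $m_i\in 8\Nl$ and $m_i\in 8\Nl-4$, chooses $D=\{1\}$ or $D=\{1,2,4\}$ accordingly, notes $D\in\mathscr{D}_{m_i}$, and then appeals to Theorems~\ref{3f} and~\ref{3e}. Your write-up is in fact more careful than the paper's in verifying the decomposition $D=\tilde D\cup D'\cup 2D'\cup 4D'$ and in making explicit that the required divisor set is the enlarged one from Theorem~\ref{3e}.
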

Notice that the complete graph $K_2$ is a $\mathit{gcd}$-graph over $\Zl_2$ and it is well known that $K_2$ exhibits PST. So the conditions in Theorem \ref{3g} are not necessary.

\section{Conclusions}
Perfect state transfer is highly desirable in quantum communication networks. Communication networks are frequently modeled on Cayley graphs. Thus it is useful to find PST in $\mathit{gcd}$-graphs. We already found a class of integral circulant graphs which are periodic and using these graphs we have constructed a class of $\mathit{gcd}$-graphs allowing PST. Also we have given a sufficient condition for PST in $\mathit{gcd}$-graphs. Further research can be done in this direction. It will be interesting to find all the $\mathit{gcd}$-graphs over a given abelian group $\Gamma$ having perfect state transfer.


\begin{thebibliography}{10}\label{bibliography}

\bibitem{ahm} A. Ahmady, \emph{Integral Cayley Graphs}, Ph.D thesis, Simon Fraser University (2013).

\bibitem{mil} M. Ba\v si\'c, \emph{Characterization of circulant networks having perfect state transfer}, Quantum inf. process, \textbf{12}:345-364 (2011).

\bibitem{ber} A. Bernasconi, C. Godsil, and S. Severini, \emph{Quantum networks on cubelike graphs}, Physical Review A, \textbf{78}:052320 (2008).

\bibitem{bose} S. Bose, \emph{Quantum communication through an unmodulated spin chain}, Physical Review Letters, \textbf{91}(20):207901 (2003).

\bibitem{chi} W. Cheung, C. Godsil, \emph{Perfect state transfer in cubelike graphs}, Linear Algebra and Its Applications, \textbf{435}(10):2468-2474 (2011).

\bibitem{god} C. Godsil, \emph{State transfer on graphs}, Discrete Mathematics, \textbf{312}(1):129–147 (2011).

\bibitem{pal} H. Pal, B. Bhattacharjya, \emph{Perfect state transfer on NEPS of the path P3}, Discrete Mathematics, doi:10.1016/j.disc.2015.10.027

\bibitem{wal} W. Klotz, T. Sander, \emph{$\mathit{gcd}$-graphs and NEPS of complete graphs}, ARS Mathematica Contemporanea, \textbf{6}(2): 289–299 (2013).

\bibitem{saxe} N. Saxena, S. Severini, and I. E. Shparlinski, \emph{Parameters of integral circulant graphs and periodic quantum dynamics}, Int. J. Quantum Inform, \textbf{05}, 417 (2007).

\bibitem{so} W. So, \emph{Integral circulant graphs}, Discrete Mathematics, \textbf{306}(1): 153-158 (2006).


\end{thebibliography}
\end{document}